\def\@settitle{\begin{center}%
\baselineskip14\p@\relax
\bfseries
\@title
\end{center}%
}
\title{Standard models of degree $1$ del Pezzo fibrations}
\author{Konstantin Loginov}
\thanks{Partially supported  by the Russian Academic Excellence Project '5-100', the Simons Foundation and the grants RFBR-15-01-02164 and MK-6019.2016.1.}
\date{} 
\newcounter{cthm}
\newtheorem{thmb}[cthm]{Theorem}
\newtheorem{proposition}[equation]{Proposition}
\newtheorem{thm}[equation]{Theorem} 
\newtheorem{cor}[cthm]{Corollary}
\newtheorem{corr}[equation]{Corollary}
\newtheorem{lem}[equation]{Lemma}
\theoremstyle{definition}
\newtheorem{defin}[equation]{Definition}
\newtheorem{remark}[equation]{Remark}
\theoremstyle{remark}
\newcommand{\simC}{\mathrel{\underset{\scriptscriptstyle{C}}{\sim}}}
\newcommand{\simQ}{\mathrel{\underset{\scriptscriptstyle{\mathbb{Q}}}{\sim}}}
\newcommand{\simQC}{\mathrel{\underset{\scriptscriptstyle{\mathbb{Q}, C}}{\sim}}}
\newcommand{\OOO}{\mathscr{O}}
\newcommand{\HHH}{\mathscr{H}}
\newcommand{\LLL}{\mathscr{L}}
\begin{document}
\newcommand{\Addresses}{{
  \bigskip
  \footnotesize
  \textsc{Laboratory of Algebraic Geometry, Faculty of Mathematics \\ National Research University Higher School of Economics \\
    119048 Moscow, Usacheva str., 6}\\
  \textit{E-mail:} \texttt{kostyaloginov@gmail.com}

}}
\maketitle

\begin{abstract}
We construct a standard birational model (a model that has Gorenstein canonical singularities) for the three-dimensional del Pezzo fibrations ${ \pi \colon X \longrightarrow C }$ of degree $1$ and relative Picard number $1$. We also embed the standard model into the relative weighted projective space $\mathbb{P}_C (1,1,2,3)$. Our construction works in the $G$-equivariant category where~$G$ is a finite group.
\end{abstract}

\section*{Introduction}
The Minimal Model Program (the MMP for short, see \cite{Matsuki2002}, \cite{KMM-1987}) is a powerful tool that helps to understand the birational properties of algebraic varieties. The minimal category in which it works is the category $\mathcal C$ of the projective varieties with at worst terminal $\mathbb Q$-factorial singularities. The result of applying this program to a projective variety is either a minimal model, that is a variety $X\in \mathcal C$ whose canonical divisor class $K_X$ is nef, or a Fano\textendash Mori fibration, that is a variety $X\in \mathcal C$ admitting a contraction morphism $\pi \colon X \longrightarrow B$ whose fibers are of positive dimension, the anti-canonical divisor class $-K_X$ is relatively ample, and $\rho (X/B) = 1$.

We will focus on the three-dimensional case. In this case, the base $B$ of the Fano\textendash Mori fibration $\pi \colon X \longrightarrow B$ can be of dimension $0$, $1$ or $2$. If $\dim B=0$ then $X$ is a (possibly singular) Fano variety. It is known that Fano varieties lie in a finite number of algebraic families. 
However, they are classified only in the smooth case. In the singular case there are partial classificational results (see e.g. \cite{Prokhorov-2016-2}).
 
If $\dim B=2$ then a general fiber of $\pi$ is a non-singular rational curve. In this case, $\pi \colon X \longrightarrow B$ is called a \textit{$\mathbb Q$-conic fibration}. It is known that in this case there exists  a standard model, that is a $\mathbb Q$-conic fibration $\pi' \colon X' \longrightarrow B'$ where $X'$ and $B'$ are non-singular, $X'$ is fiberwise birationally equivalent to $X$, and  $\rho (X'/B') = 1$ (see \cite{Sarkisov}).

Finally, if $\dim B=1$ then a general fiber of $\pi$ is a non-singular del Pezzo surface. In this situation the fibration $\pi \colon X \longrightarrow B$ is called a \textit{$\mathbb{Q}$-del Pezzo fibration}. Standard models of such fibrations were considered in the work of A.~Corti \cite{Corti-1996}, see also \cite{Ko-1997}.

For the applications to the problem of classification of the finite subgroups in the Cremona group (see e. g. \cite{Prokhorov-Shramov}), as well as for birational classification of varieties over algebraically non-closed fields one should change the category $\mathcal C$. We will work with the varieties defined over an arbitrary field of characteristic $0$ that admit an action of a finite group $G$. In this case, we can apply the $G$-equivariant Minimal Model Program (the $G$-MMP, see \cite[2.18, 2.19]{KM-1998}, \cite[0.3.14]{Mori-1988}). Again, a final product of applying this program can be either a $G$-minimal model, or a $G$-Fano\textendash Mori fibration. For a three-dimensional $G$-Fano\textendash Mori fibration $\pi \colon X \longrightarrow B$, as in the ``classical'' situation, we have three possibilities:
\begin{itemize}
\item
$G\mathbb{Q}$-Fano varieties. In general, this class is poorly understood. Partial results can be found in the works \cite{Prokhorov-2015}, \cite{Prokhorov-2016}, \cite{Prokhorov-2016-2}, \cite{Prokhorov-Shramov}. 
\item
$G\mathbb{Q}$-conic fibrations. In this case existence of the standard model is proven in the work \cite{Avilov-conic-r}.
\item
$G\mathbb{Q}$-del Pezzo fibrations (see Definition \ref{defin-1}). In the present work, we construct standard models of $G\mathbb{Q}$-del Pezzo fibrations of degree $1$.
\end{itemize}

The following theorems are the main results of this paper (the necessary definitions are given in \S \ref{section-1}).

\begin{thmb}
\label{thma-A}
Let $X$ be a projective three-dimensional $G$-variety and $C$ be a $G$-curve. Let $\pi \colon X \longrightarrow C$ be a proper $G$-morphism whose generic fiber is a non-singular degree $1$ del Pezzo  surface $X_\eta$, and $\mathrm{Pic}^G (X/C)$ is generated by $-K_X$ and $G$-components of fibers of $\pi$. Then there exists a \emph{Gorenstein model}, that is a generalised $G$-del Pezzo fibration $\sigma \colon Y \longrightarrow C'$ such that 
\begin{enumerate}
\item
the following diagram is commutative 
\[
\xymatrix{
X \ar@{-->}[r]^\chi\ar@{->}[d]^{\pi} & Y\ar[d]_{\sigma}
\\
C\ar@{-->}[r]&C'
} 
\]
where $\chi$ is a birational $G$-equivariant map,
\item
$Y$ has only $\mathbb{Q}$-factorial canonical Gorenstein singularities, 
\item
$C'$ is non-singular and projective, 
\item
$\chi$ induces an isomorphism between $X_\eta$ and $Y_{\eta'}$ where $Y_{\eta'}$ is the generic fiber of $\sigma$, 
\item
any fiber of $\sigma$ is reduced and irreducible.
\end{enumerate}

\end{thmb}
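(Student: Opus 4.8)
The plan is to realise $Y$ as a relative Weierstrass model of $X$ over $C$ inside a weighted projective bundle, and then to correct its singularities by explicit $G$-equivariant birational surgery. I would proceed in three stages: reduce to a genuine del Pezzo fibration with $G$-relative Picard number one, set up the relative weighted embedding coming from the degree $1$ del Pezzo structure, and finally analyse and improve the singularities of the total space.

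\emph{Stage 1 (reduction to $\rho^G(X/C)=1$).} First I would run a $G$-equivariant relative MMP for $X$ over $C$. The hypothesis that $\mathrm{Pic}^G(X/C)$ is generated by $-K_X$ and the $G$-components of fibers means that, after contracting the vertical $G$-invariant divisors (and performing the intervening flips), one reaches a $G\mathbb Q$-del Pezzo fibration $X^\flat\to C$ with terminal $\mathbb Q$-factorial singularities, $-K_{X^\flat}$ relatively ample, and $\rho^G(X^\flat/C)=1$. Since the generic fiber $X_\eta$ already has $G$-invariant Picard rank one, every step of this MMP is an isomorphism over the generic point of $C$, so $X_\eta$ is preserved intact; replacing $C$ by the smooth projective model of $k(C)$ produces the nonsingular, projective $C'$ of item (iii).

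\emph{Stage 2 (relative Weierstrass embedding).} The generic fiber $X_\eta$ is a degree $1$ del Pezzo surface over $K=k(C')$, whose anticanonical ring is generated in degrees $1,1,2,3$ with a single relation in degree $6$, presenting $X_\eta$ as a Weierstrass sextic $z^2=y^3+f_4(x_0,x_1)\,y+f_6(x_0,x_1)$ in $\mathbb P_K(1,1,2,3)$. I would globalise this by choosing line bundles on $C'$ carrying $y$ and $z$ compatibly with $\mathcal E=\sigma_*\OOO(-K)$, assembling a relative weighted projective bundle $\mathbb P_{C'}(1,1,2,3)$, and spreading the sextic to a relative Weierstrass equation whose zero locus is the candidate model $Y$. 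By construction $-K_Y=\OOO_Y(1)$ is Cartier and the natural birational $G$-map $\chi\colon X\dashrightarrow Y$ restricts to the identity on $X_\eta$, giving items (i), (iv) and the embedding into $\mathbb P_{C'}(1,1,2,3)$. Each fiber of $\sigma$ is a single weighted sextic, hence connected and, being a hypersurface (so Cohen–Macaulay), irreducible — item (v) — provided the relative equation stays irreducible over every point of $C'$, which I would arrange in the final stage.

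\emph{Stage 3 (canonical Gorenstein singularities — the crux).} Since $Y$ is a hypersurface in a Gorenstein ambient bundle, it is automatically Cohen–Macaulay and Gorenstein away from the orbifold strata of $\mathbb P(1,1,2,3)$; the real work is to guarantee that $Y$ has only \emph{canonical} singularities and meets those strata well. I expect this to be the main obstacle. The non-Gorenstein terminal points of $X^\flat$ and the over-degenerate fibers correspond to points of $C'$ where the Weierstrass data $(f_4,f_6)$ vanishes too highly, producing non-canonical or reducible fibers; one must bring $(f_4,f_6)$ into a minimal form by a finite sequence of $G$-equivariant operations — base changes and weighted blow-ups of $C'$ together with the corresponding twists of the bundle — and verify at each step, via Reid's criteria and the classification of Weierstrass singularities, that discrepancies remain $\ge 0$, that no fiber becomes reducible, and that the procedure terminates equivariantly. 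Finally I would take a small $G$-$\mathbb Q$-factorialisation of $Y$: being an isomorphism in codimension one it has no exceptional divisors, hence is crepant (so $K$ stays Cartier and the total space stays Gorenstein canonical) and preserves the number of fiber components, yielding the $\mathbb Q$-factoriality required for item (ii) while keeping items (iv) and (v) intact.
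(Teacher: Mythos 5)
Your Stage 3 is not a proof but a program, and it is precisely where the entire content of the theorem lives. You propose to spread the generic Weierstrass sextic over $C'$ and then ``bring $(f_4,f_6)$ into minimal form'' so that the total space becomes canonical with irreducible fibers; but you give no construction of this minimization, no proof that it terminates $G$-equivariantly, and no argument that the minimal form actually has canonical singularities and irreducible fibers (none of these is automatic: minimal Weierstrass models of elliptic \emph{surfaces} are canonical by a nontrivial theorem, and the analogous statement for degree $1$ del Pezzo fibrations over a curve is exactly what is to be proved). Moreover, to globalise the equation at all you need the sheaves $\sigma_*\OOO(-mK)$ to be locally free of the correct ranks with well-behaved multiplication maps, which in the paper is only available \emph{after} the Gorenstein canonical model is built (Proposition \ref{claim-12} is applied fiberwise to Gorenstein del Pezzo fibers); starting from a model with non-Gorenstein terminal points, the pushforwards can jump and your relative $\mathbb{P}_{C'}(1,1,2,3)$ is not yet defined. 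The paper takes the opposite route: it first produces a canonical model of the \emph{pair} $(X,\HHH)$ with $\HHH=|-K_X+\pi^*D|$ (Theorem \ref{theorem-7}, where the delicate point is killing the discrepancies $a_i$ using the section $\bar\Gamma$ and the multiple-fiber analysis), then takes a terminal modification, shows $\bar\HHH$ is nef and big, and contracts by the Contraction Theorem to get $Y$ with $K_Y$ Cartier (Theorem \ref{theorem-10}); the embedding into $\mathbb{P}_C(1,1,2,3)$ is a separate, later step (Theorem \ref{thma-C}), not the mechanism by which canonicity is achieved.

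Two smaller gaps: in Stage 1 you assert that the relative $G$-MMP is an isomorphism on the generic fiber, but an MMP step can contract a divisor dominating $C$, and the output could a priori be a Mori fiber space over a surface; ruling this out and identifying the generic fiber requires the birational rigidity of degree $1$ del Pezzo surfaces over the function field (Lemma \ref{lem-15} and Proposition \ref{claim-19}), which you use implicitly but never invoke. And your final small $G$-$\mathbb{Q}$-factorialisation, if nontrivial, destroys relative ampleness of $-K_Y$ (it becomes only nef and big), so $Y$ would no longer be a generalised del Pezzo fibration; the paper instead proves directly that a Gorenstein canonical model with irreducible fibers is already $\mathbb{Q}$-factorial, via the excision sequence and the finiteness of non-cDV points.
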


\begin{cor}
\label{thma-B}
If $\pi: X \longrightarrow C$ is a $G\mathbb{Q}$-del Pezzo fibration of degree $1$ then it has a model with at worst $\mathbb{Q}$-factorial canonical Gorenstein singularities, with irreducible fibers and with the same generic fiber as $\pi$.
\end{cor}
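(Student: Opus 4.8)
The plan is to obtain the corollary as a direct application of Theorem~\ref{thma-A}: I would show that a $G\mathbb{Q}$-del Pezzo fibration of degree $1$ already satisfies the hypotheses of that theorem, the only nonformal point being the condition on $\mathrm{Pic}^{G}(X/C)$. By Definition~\ref{defin-1} such a fibration $\pi\colon X\to C$ has $X$ with terminal $\mathbb{Q}$-factorial singularities, $-K_{X}$ relatively ample, and relative $G$-invariant Picard number $\rho^{G}(X/C)=1$. Since we work in characteristic $0$, generic smoothness applied to the dominant morphism $\pi$ shows that the generic fiber $X_{\eta}$ is geometrically regular over the generic point of $C$; together with $(-K_{X_{\eta}})^{2}=1$ this makes $X_{\eta}$ a non-singular del Pezzo surface of degree $1$. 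Thus all the hypotheses of Theorem~\ref{thma-A} hold once the generation statement for $\mathrm{Pic}^{G}(X/C)$ is checked.

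To check it, I would restrict to the generic fiber. For the smooth projective curve $C$ the standard sequence $\mathrm{Pic}(X)\to\mathrm{Pic}(X_{\eta})\to 0$ has kernel generated by the vertical (fiber) divisors; taking $G$-invariants yields a map $\mathrm{Pic}^{G}(X/C)\to\mathrm{Pic}^{G}(X_{\eta})$ whose kernel is contained in the subgroup generated by the $G$-orbit sums of fiber components, that is, by the $G$-components of fibers. Hence it suffices to prove that $-K_{X_{\eta}}$ generates $\mathrm{Pic}^{G}(X_{\eta})$: for then any $D\in\mathrm{Pic}^{G}(X/C)$ restricts to $n(-K_{X_{\eta}})$ for some $n\in\mathbb{Z}$, so $D-n(-K_{X})$ lies in the kernel and is a combination of $G$-components of fibers.

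The main step, which I expect to be the principal obstacle, is this integral generation. First I would use $\rho^{G}(X/C)=1$ to pin down the rank. Counting contributions fiber by fiber gives $\rho^{G}(X/C)=\rho^{G}(X_{\eta})+\sum_{t}(k_{t}-1)$, where the sum runs over the finitely many reducible fibers and $k_{t}$ is the number of $G$-orbits of components of the fiber over $t$ (each summand is nonnegative, and the equality follows from the independence of the vertical classes supplied by Zariski's lemma, i.e. the negative-definiteness of the fiber intersection form modulo the total fiber). As $-K_{X_{\eta}}$ is a nonzero $G$-invariant ample class we have $\rho^{G}(X_{\eta})\ge 1$, so $\rho^{G}(X/C)=1$ forces $\rho^{G}(X_{\eta})=1$; since $\mathrm{Pic}^{G}(X_{\eta})$ is torsion-free of rank $1$, it is isomorphic to $\mathbb{Z}$. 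It remains to see that $-K_{X_{\eta}}$ is a \emph{primitive} generator rather than a proper multiple: on a degree $1$ del Pezzo surface $-K$ has coprime coordinates $3H-E_{1}-\dots-E_{8}$ and so is primitive in the geometric Picard lattice, and a class primitive in the ambient lattice stays primitive in any sublattice to which it belongs, in particular in $\mathrm{Pic}^{G}(X_{\eta})$. This yields the generation statement.

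Finally, with the hypotheses verified, Theorem~\ref{thma-A} produces a generalised $G$-del Pezzo fibration $\sigma\colon Y\to C'$; its properties~(ii),~(iv) and~(v) give respectively the $\mathbb{Q}$-factorial canonical Gorenstein singularities, the preservation of the generic fiber (via the isomorphism $X_{\eta}\cong Y_{\eta'}$ induced by $\chi$), and the irreducibility of every fiber, which together are exactly the assertions of the corollary.
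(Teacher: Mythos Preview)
Your approach is correct and matches the paper's intended deduction of Corollary~\ref{thma-B} from Theorem~\ref{thma-A}; the paper gives no separate argument, treating the Picard condition as evident from $\rho^{G}(X/C)=1$. Two small points are worth flagging. First, the smoothness of $X_{\eta}$ is not ``generic smoothness'' (which requires a smooth total space) but follows because terminal threefold singularities are isolated, so the singular locus of $X$ misses the generic fiber---this is exactly what the paper records after Definition~\ref{defin-1}. Second, the Zariski-lemma rank formula you invoke is a statement about surface fibrations, and its threefold analogue would need separate justification; but you do not actually need it. Since $\mathrm{Pic}^{G}(X/C)$ is torsion-free of rank $\rho^{G}(X/C)=1$, hence isomorphic to $\mathbb{Z}$, your primitivity argument for $-K_{X_{\eta}}$ in the geometric Picard lattice (which restricts well because numerical and linear equivalence coincide on a del Pezzo surface) already shows that $-K_{X}$ is indivisible and hence generates $\mathrm{Pic}^{G}(X/C)$, without ever computing $\rho^{G}(X_{\eta})$.

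It is also worth noting that the paper's internal structure offers a shortcut bypassing Theorem~\ref{thma-A} and its Picard hypothesis entirely: Theorems~\ref{theorem-7} and~\ref{theorem-10} already take a $G\mathbb{Q}$-del Pezzo fibration of degree~$1$ as input and produce the Gorenstein model directly, which is the more economical route to the corollary.
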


\begin{thmb}
\label{thma-C}
Let $\sigma \colon Y \longrightarrow C $ be a generalised $G$-del Pezzo fibration of degree~$1$, and let Y have only Gorenstein canonical singularities. Then $Y$ admits an embedding over~$C$ into the relative weighted projective space $$Y \hookrightarrow \mathbb{P}_C (1,1,2,3).$$
\end{thmb}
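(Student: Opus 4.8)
The plan is to recover $Y$ from its relative anticanonical algebra and to show that this graded algebra is generated in weights $1,2,3$ with a single relation in weight $6$. Write $\mathcal{R}=\bigoplus_{n\ge 0}\mathcal{R}_n$, where $\mathcal{R}_n=\sigma_*\OOO_Y(-nK_Y)$. Since $-K_Y$ is $\sigma$-ample, $Y\cong\operatorname{Proj}_C\mathcal{R}$, so everything is encoded in $\mathcal{R}$ as a sheaf of graded $\OOO_C$-algebras. The first step is vanishing and local freeness. As $Y$ has canonical (hence klt) Gorenstein singularities and $-(n+1)K_Y$ is $\sigma$-ample for $n\ge 0$, relative Kawamata--Viehweg vanishing applied to $-nK_Y=K_Y+\bigl(-(n+1)K_Y\bigr)$ gives $R^i\sigma_*\OOO_Y(-nK_Y)=0$ for all $i>0$ and $n\ge 0$. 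Each fiber $Y_c$ is an irreducible del Pezzo surface of degree $1$ with at worst Gorenstein canonical (du Val) singularities, so Riemann--Roch together with fiberwise vanishing yields $h^0(Y_c,-nK_{Y_c})=1+\tfrac{n(n+1)}{2}$, independent of $c$. By cohomology and base change over the smooth curve $C$, each $\mathcal{R}_n$ is therefore locally free of rank $1+\tfrac{n(n+1)}{2}$ (so of ranks $1,2,4,7,\dots$), and its formation commutes with base change.

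Next I would construct the weighted generators. Put $\mathcal{E}:=\mathcal{R}_1$, a rank-$2$ bundle supplying the two weight-$1$ coordinates. The multiplication map $\operatorname{Sym}^2\mathcal{E}\to\mathcal{R}_2$ has source of rank $3$ and target of rank $4$, and it is injective on every fiber because the anticanonical pencil of a degree-$1$ del Pezzo surface admits no quadratic syzygy; hence it is a subbundle inclusion and its cokernel is a line bundle $\mathcal{L}_2$, the weight-$2$ coordinate. Likewise the combined map $\operatorname{Sym}^3\mathcal{E}\oplus(\mathcal{E}\otimes\mathcal{L}_2)\to\mathcal{R}_3$, of rank $6$ into rank $7$, is fiberwise injective, and its cokernel is a line bundle $\mathcal{L}_3$, the weight-$3$ coordinate. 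All of these sheaves are built canonically from $\OOO_Y(-nK_Y)$ and the multiplication in $\mathcal{R}$; since $\sigma$ is $G$-equivariant and $K_Y$ is $G$-invariant, they inherit canonical $G$-linearizations, so the whole construction takes place in the $G$-equivariant category.

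The decisive step is to prove that $\mathcal{R}$ is generated as an $\OOO_C$-algebra by $\mathcal{E}$, $\mathcal{L}_2$ and $\mathcal{L}_3$. I would check this fiber by fiber, invoking the classical fact that the anticanonical ring of a degree-$1$ del Pezzo surface --- including its singular but irreducible Gorenstein members --- is generated in degrees $1,2,3$ with a single relation in degree $6$, and then lift it to the relative setting by Nakayama's lemma applied to the cokernels of the multiplication maps $\mathcal{R}_a\otimes\mathcal{R}_b\to\mathcal{R}_{a+b}$, using the base-change statement above. This generation is exactly the surjectivity of the natural map from the weighted symmetric algebra on $\mathcal{E}\oplus\mathcal{L}_2\oplus\mathcal{L}_3$ onto $\mathcal{R}$, and such a surjection of graded $\OOO_C$-algebras induces a closed immersion $Y=\operatorname{Proj}_C\mathcal{R}\hookrightarrow\mathbb{P}_C(1,1,2,3)$, where the relative weighted projective space is the relative $\operatorname{Proj}$ of that free weighted symmetric algebra. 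Finally, a comparison of relative Hilbert series shows that the image is cut out by a single equation of weighted degree $6$, which fiberwise is a Weierstrass-type sextic $z^2=y^3+a_4(x_0,x_1)\,y+a_6(x_0,x_1)$ after completing the square and the cube in characteristic $0$.

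The main obstacle is this last generation statement: passing from the classical fiberwise description to an assertion about the sheaf of algebras $\mathcal{R}$. Everything hinges on the fiberwise ranks of the cokernels of the multiplication maps being constant --- which is precisely what Kawamata--Viehweg vanishing and base change guarantee --- so that $\mathcal{L}_2$ and $\mathcal{L}_3$ are honest line bundles rather than merely coherent cokernels, and on verifying that the degree-$(1,2,3)$ generation with a single degree-$6$ relation persists for the singular, but still Gorenstein canonical and irreducible, fibers permitted here.
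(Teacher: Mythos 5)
Your overall strategy --- recover $Y$ as $\operatorname{Proj}_C$ of the relative anticanonical algebra $\mathcal{R}=\bigoplus_n\sigma_*\OOO_Y(-nK_Y)$, show the $\mathcal{R}_n$ are vector bundles of rank $1+\tfrac{n(n+1)}{2}$, and use generation in weights $1,2,3$ --- is the same as the paper's. But there is a genuine gap at the decisive step. You define $\mathcal{L}_2$ and $\mathcal{L}_3$ as \emph{cokernels} of the multiplication maps, i.e.\ as quotients of $\mathcal{R}_2$ and $\mathcal{R}_3$. A surjection of graded algebras $S^\bullet(\mathcal{E}\oplus\mathcal{L}_2\oplus\mathcal{L}_3)\to\mathcal{R}$ requires maps $\mathcal{L}_2\to\mathcal{R}_2$ and $\mathcal{L}_3\to\mathcal{R}_3$, that is, global splittings of the exact sequences
$0\to S^2\mathcal{E}\to\mathcal{R}_2\to\mathcal{L}_2\to 0$ and its degree-$3$ analogue. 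Over a curve such sequences of vector bundles need not split, and no splitting is constructed; the same problem already appears in your intermediate map $\mathcal{E}\otimes\mathcal{L}_2\to\mathcal{R}_3$, which is undefined without a lift of $\mathcal{L}_2$ into $\mathcal{R}_2$. So the object you call $\mathbb{P}_C(1,1,2,3)=\operatorname{Proj}_C S^\bullet(\mathcal{E}\oplus\mathcal{L}_2\oplus\mathcal{L}_3)$ does not visibly receive $Y$ as a closed subscheme. The paper is explicit that the splittings exist only fiberwise and non-canonically, and this is exactly why it takes a detour: it first embeds $Y$ into $\mathbb{P}_C(1^2,2^4,3^7)=\operatorname{Proj}_C S^\bullet(A_1\oplus A_2\oplus A_3)$ (no choices needed, since $A_2$ and $A_3$ are taken wholesale as generators), then uses the bianticanonical double cover of the relative quadric cone $\mathbb{P}_C(1,1,2)$ to cut out a subvariety $Z$ with fibers $\mathbb{P}(1,1,2,3)$ containing $j(Y)$, and \emph{defines} $\mathbb{P}_C(1,1,2,3)$ to be $Z$. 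Your proof either needs that construction or an actual argument that the splittings exist (e.g.\ via the canonical section of $\sigma$ given by the base locus of $|-K_{Y/C}|$), which you do not supply.

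A secondary point: you assert the fibers are degree-$1$ del Pezzo surfaces with du Val singularities and compute $h^0(Y_c,-nK_{Y_c})$ from that. Since $Y$ is only canonical Gorenstein, finitely many fibers may pass through non-cDV points and can be \emph{non-normal}; the paper's Definition \ref{defin-delpezzo} deliberately allows this, and Proposition \ref{claim-12} proves the rank formula for non-normal degree-$1$ del Pezzo surfaces separately via Reid's classification ($T\simeq\mathbb{P}^2$ with $\alpha^*(-K_S)\simeq\OOO_{\mathbb{P}^2}(1)$) and the Abe--Furushima comparison of sections. The formula you need is still true, but your justification does not cover these fibers. The concluding remarks about a single degree-$6$ relation and the Weierstrass form are not needed for the statement and can be dropped.
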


We prove these results in several steps. First, in \S \ref{section-1}, Proposition \ref{claim-4}, we give the main definitions and prove some preliminary results. Second, in \S \ref{section-2} we establish some rigidity properties for del Pezzo surfaces and del Pezzo fibrations of degree $1$. Third, in \S \ref{section-3}, Proposition \ref{claim-3}, starting from a del Pezzo fibration of degree~$1$ as in Theorem \ref{thma-A}, we show that $X$ is $G$-birational over $C$ to a $G\mathbb{Q}$-del Pezzo fibration of degree $1$. After that, in \ref{theorem-7} we construct a canonical model of $X$, that is a fibration $\bar{\pi} \colon \bar{X} \longrightarrow C$ which is $G$-birational to $X$ over $C$ and such that the pair $(\bar{X}, | -K_{\bar{X}}+\bar{\pi}^*\bar{D} |)$ is canonical for some $\bar{D}$. Next, in \S \ref{section-4}, Theorem \ref{theorem-10}, we construct a Gorenstein model which proves Theorem~\ref{thma-A}. After that, in $\S \ref{section-5}$, we recall some facts on the anticanonical algebra of degree $1$ del Pezzo surfaces. Finally, in \S \ref{section-6}, we embed a Gorenstein $G$-fibration  
$Y$ into $\mathbb{P}_C (1,1,2,3)$ proving Theorem \ref{thma-C}.

\

The author is grateful to his scientific advisor Yu. Prokhorov for posing the problem and constant support in  writing the paper, and to A. Kuznetsov and C. Shramov for many helpful discussions.   

\section{Preliminaries}
\label{section-1}

We work over a field of characteristic $0$, not necessarily algebraically closed. We also fix a finite group $G$. Recall the standard definitions.

\begin{defin}
\label{defin-G1}
An algebraic variety $X$ is called a \emph{$G$-variety} (or \emph{a variety with an action of the group $G$}) if there exists a homomorphism (not necessarily injective) $${\phi: G \longrightarrow \mathrm{Aut}\ X}.$$ 
\end{defin}

\begin{defin}
\label{defin-G2}
A rational map of $G$-varieties $f: X \dashrightarrow Y$ is called a \emph{$G$-map} if $f$ commutes with the action of the group $G$ on $X$ and on $Y$. If the map $f$ is birational we say that $X$ is \emph{$G$-birational} to $Y$. If the map $f$ is a morphism then $f$ is called a \emph{$G$-morphism}.
\end{defin}

\begin{defin}
\label{defin-G3}
A $G$-variety $X$ is called \emph{$G\mathbb{Q}$-factorial} if any $G$-invariant Weil divisor is $\mathbb{Q}$-Cartier.
\end{defin}
 
Let us fix the notation. For a vector space (or a vector bundle) $A$ we denote its $k$-th symmetric power by $S^k A$, and its full symmetric power by $S^\bullet A$. The base locus of a linear system~$\LLL$ on~$X$ we denote by $\mathrm{Bs} \ \LLL$. Let $\pi: X \longrightarrow C$ be a proper morphism onto the variety $C$. By $\eta$ we denote the generic point of $C$, and by $X_\eta$ the generic fiber of $\pi$. By a general fiber we mean a fiber over some closed point in an open subset $U \subset C$. We denote by $Z_1(X/C)$ a free abelian group generated by reduced irreducible curves which are mapped to points by $\pi$. There is a natural intersection pairing $$ (\ ,\ ) : \mathrm{Pic} (X) \times Z_1 (X/C) \longrightarrow \mathbb{Z}.$$ We put $\mathrm{Pic} (X/C) = \mathrm{Pic} (X) / \equiv$ where $\equiv$ is the numerical equivalence with respect to the pairing introduced above. We denote by $\rho (X/C)$ the dimension of $\mathrm{Pic} (X/C) $ and by $\rho^G (X/C)$ the dimension of the $G$-invariant subspace $\mathrm{Pic}^G (X/C)$.


Let~$D$ and~$D'$ be divisors on $X$. We write $D \simC D'$ if $D \sim D' + \pi^* E $ for some Cartier divisor $E$ on $C$. If $\mathbb{Q}$-divisors on $X$ are $\mathbb{Q}$-linearly equivalent we write $D \simQ D'$. Finally, we write $D \simQC D'$ if $\mathbb{Q}$-divisors $D$ and $D'$ on $X$ are $\mathbb{Q}$-linearly equivalent over $C$, that is $D \simQ D' + \pi^* E$. 

We introduce the main definitions.

\begin{defin}
\label{defin-delpezzo}
\emph{A del Pezzo surface} $S$ is a (not necessarily normal) projective surface that has at worst Gorenstein singularities and whose anti-canonical divisor class~$-K_S$ is ample. \emph{The degree} of a del Pezzo surface $S$ is the number $(-K_S)^2$.
\end{defin}

\begin{defin}
\label{defin-1}
Let $X$ be a three-dimensional normal projective $G$-variety with at worst terminal singularities and let $C$ be a non-singular $G$-curve. Assume that 

\begin{enumerate}
\item\label{defin-1a}
$X$ is $G\mathbb{Q}$-factorial;

\item \label{defin-1b}
there exists a projective $G$-morphism with connected fibers $\pi \colon X \longrightarrow C$;

\item \label{defin-1c}
$-K_X$ is $\pi$-ample;

\item \label{defin-1d}
$\pi$ is an extremal contraction, that is $\rho^{G} (X / C) = 1$.
\end{enumerate}

Then $\pi \colon X \longrightarrow C$ is called a \emph{$G\mathbb{Q}$-del Pezzo fibration}. \emph{The degree of a $G\mathbb{Q}$-del Pezzo fibration} is the degree of its generic fiber $X_\eta$. Since $X$ is terminal  $X_\eta$ is a non-singular del Pezzo surface. If $X$ has Gorenstein singularities we call $\pi \colon X \longrightarrow C$ a \emph{$G$-del Pezzo fibration}.

\end{defin}

\begin{defin}
\label{defin-2}
We call $\pi \colon X \longrightarrow C$ a \emph{generalised $G\mathbb{Q}$-del Pezzo fibration} if $X$ has at worst canonical singularities and the conditions \ref{defin-1b} and \ref{defin-1c} of the definition \ref{defin-1} are satisfied. Notice that in this case the generic fiber $X_\eta$ can be singular.
\end{defin}

We will work with the anticanonical linear system on $X$.

\begin{proposition}[{\cite[2.17, 2.19]{Alexeev-1994ge}}]
\label{claim-4}
Let $\pi \colon X \longrightarrow C$ be a $G\mathbb{Q}$-del Pezzo fibration. Then there exists a divisor $D$ (which we may assume to be $G$-invariant) on $C$ such that the linear system $\HHH=| -K_X+\pi^*D |$ on $X$ is non-empty and has no fixed components.
\end{proposition}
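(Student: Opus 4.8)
The plan is to produce the system in two movements: first twist the relative anticanonical sheaf by a sufficiently positive $G$-invariant divisor on $C$ so that it acquires global sections, and then strip off the fixed part, which will be forced to be vertical, using the extremality hypothesis $\rho^G(X/C)=1$.

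First I would set up the relative anticanonical sheaf. Since $-K_X$ is $\pi$-ample and $\pi$ is proper with connected fibers, $\mathcal E := \pi_*\OOO_X(-K_X)$ is coherent and torsion-free, hence locally free on the nonsingular curve $C$; its rank is $h^0(X_\eta,-K_{X_\eta})=(-K_{X_\eta})^2+1\ge 2$ by Riemann--Roch on the del Pezzo generic fiber. By relative Serre vanishing, for any $G$-invariant divisor $D$ on $C$ of sufficiently large degree the sheaf $\mathcal E(D)=\pi_*\OOO_X(-K_X+\pi^*D)$ is globally generated, so in particular $H^0(X,-K_X+\pi^*D)=H^0(C,\mathcal E(D))\neq 0$. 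A $G$-invariant $D$ is arranged by replacing an ample $D_0$ by $\sum_{g\in G}g^*D_0$. Thus $\HHH=|-K_X+\pi^*D|$ is non-empty, and since $-K_X$ and $\pi^*D$ are $G$-invariant, $\HHH$ is a $G$-invariant system.

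Next I would identify the fixed part $F$ of $\HHH$ as vertical and $G$-invariant. It is $G$-invariant because $\HHH$ is. Restricting to the generic fiber, $\pi^*D|_{X_\eta}=0$, so $\HHH|_{X_\eta}=|-K_{X_\eta}|$; the anticanonical system of a smooth del Pezzo surface has no fixed component, a property that descends from $\overline{k(C)}$ to $k(C)$ since a fixed component over $k(C)$ would base-change to one over $\overline{k(C)}$. Hence $F|_{X_\eta}=0$, so no component of $F$ dominates $C$, i.e. $F$ is vertical. The main step is then to remove $F$. Because $F$ is vertical, $F\cdot\ell=0$ for a curve $\ell$ in a general fiber, while $(-K_X)\cdot\ell>0$. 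As $F$ is $G$-invariant its class lies in $\mathrm{Pic}^G(X/C)$, which is one-dimensional and spanned by $[-K_X]$; the two intersection numbers force $[F]=0$ there, and since $\mathrm{Pic}^G(X/C)\subseteq\mathrm{Pic}(X/C)$ this means $F$ is numerically trivial over $C$. A Zariski-lemma type argument for vertical divisors---the intersection form of the components of a fiber against curves in that fiber is negative semi-definite with radical spanned by the full fiber $\pi^*P$---then shows $F$ is a non-negative combination of fibers, so $F=\pi^*E$ for an effective $G$-invariant divisor $E$ on $C$. Replacing $D$ by $D-E$ converts $\HHH$ into $|-K_X+\pi^*(D-E)|$, whose fixed part is $F-\pi^*E=0$, which proves the statement.

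I expect the Zariski-lemma step, together with the integrality bookkeeping over the special fibers, to be the main obstacle. The delicate case is a \emph{multiple fiber} $\pi^*P=q\,W$ with $q>1$: there $E$ could be only a $\mathbb{Q}$-divisor and the residual $\{t_P\}\,\pi^*P$ would be a genuine integral fixed component that no integral twist of $D$ removes. Ruling this out is exactly where the extremality $\rho^G(X/C)=1$, constraining how $G$ permutes the components of a reducible fiber, together with the terminal structure of $X$, must be invoked---a naive semicontinuity argument on $\mathcal E$ alone does not suffice.
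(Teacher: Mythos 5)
The paper does not actually prove Proposition~\ref{claim-4}: it is quoted from \cite{Alexeev-1994ge}, so there is no in-paper argument to compare against. Judged on its own terms, your reduction is the standard one and most of it is sound: $\pi_*\OOO_X(-K_X)$ is locally free of rank $d+1\geq 2$ on the smooth curve $C$, twisting by a sufficiently ample averaged divisor gives non-emptiness and surjectivity onto $H^0(X_\eta,-K_{X_\eta})$, hence the fixed part $F$ is vertical; $G$-invariance of $F$ plus $\rho^G(X/C)=1$ forces $F$ to be numerically trivial on every curve contracted by $\pi$, and a Zariski-lemma argument then gives $F=\sum c_P\,\pi^*P$ with $c_P\in\mathbb{Q}_{\geq 0}$. (One technical point there: under $G\mathbb{Q}$-factoriality the individual components $F_i$ of a fiber need not be $\mathbb{Q}$-Cartier, so you cannot intersect them with curves one at a time; you should run the Zariski argument on $G$-orbits of components, which are $\mathbb{Q}$-Cartier and along which the coefficients of the $G$-invariant divisor $F$ are constant. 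This is fixable.)

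The genuine gap is the one you name yourself and then do not close: when $\pi^*P=q\,W$ is a multiple fiber with $q>1$, the Zariski step only gives $c_P\in\frac{1}{q}\mathbb{Z}$, and if $c_P\notin\mathbb{Z}$ no integral twist of $D$ removes the residual fixed component. You assert that extremality and terminality ``must be invoked'' here, but you do not supply the argument, and this case cannot be waved away: degree~$1$ del Pezzo fibrations genuinely possess multiple fibers (this is the subject of \cite{MP-2008}, which appears in the bibliography), so the delicate case occurs and is in fact the entire non-formal content of the cited result of Alexeev --- everything before it is soft. A proof that the reduced fiber $W$ cannot be a fixed component of the local anticanonical system with multiplicity prime to $q$ requires a real input (e.g.\ relative vanishing and a base-change/restriction analysis along the multiple fiber, as in \cite{Alexeev-1994ge}), not just the numerical bookkeeping you have set up. As it stands the proposal is an accurate road map with the hardest step marked ``to be invoked,'' which is not a proof.
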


We will use the language of singularities of the linear systems, introduced in \cite[1.8]{Alexeev-1994ge} and \cite{Corti-1995}. It is easy to see that for a $G\mathbb{Q}$-del Pezzo fibration the restriction of the linear system~$\HHH$ chosen above to a general fiber of the morphism $\pi$ is surjective. If the degree of a general fiber is $1$ then the  linear system $\HHH$ has one simple base-point on it. Hence it is easy to see that the pair $(X, \HHH)$ is canonical outside a finite number of fibers. Our aim is to construct a canonical model for the pair $(X, \HHH)$. We will need the following lemmas:

\begin{lem}
\label{lem-14}
Let $\pi \colon X \longrightarrow C$ be a generalised Gorenstein $G$-del Pezzo fibration (that is, $K_X$ is Cartier) of degree $d$ with at worst canonical singularities. Let $F$ be the scheme fiber over a closed point. Write $F=\sum m_i F_i$ where $F_i$ are irreducible components. Then $\sum m_i \le d$. In particular, if $d=1$, then any geometric fiber is reduced and irreducible.
\end{lem}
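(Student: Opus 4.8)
The plan is to compute the intersection number $(-K_X)^2\cdot F$ in two ways: globally, where flatness forces it to equal the degree $d$ on every fiber, and locally, by summing the contributions of the components $F_i$, each of which is at least its own multiplicity. The Gorenstein hypothesis is exactly what makes the estimate work, since it guarantees that $-K_X$ is an honest Cartier divisor and hence that every intersection number in sight is an integer. First I would record that $\pi$ is flat: as $X$ is Gorenstein it is Cohen--Macaulay, and since $X$ is irreducible of dimension $3$ mapping onto the smooth curve $C$, every fiber has pure dimension $2$ (a $3$-dimensional fiber would be all of $X$); by miracle flatness a Cohen--Macaulay scheme with equidimensional fibers over a regular base is flat. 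Thus the scheme fiber $F=\pi^*(p)$ over a closed point $p$ is a Cartier divisor, and for the Cartier line bundle $-K_X$ the self-intersection $(-K_X)^2\cdot[X_t]$ is constant along $C$ (it is the degree-two coefficient of the fiberwise Hilbert polynomial, which is locally constant in a flat family). Evaluating at $t=\eta$ and using adjunction, $-K_X|_{X_\eta}=-K_{X_\eta}$, so this constant equals $(-K_{X_\eta})^2=d$; in particular $(-K_X)^2\cdot F=d$ for the chosen special fiber as well.

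Next I would expand the fiber cycle $[F]=\sum_i m_i[F_i]$, where each $F_i$ is taken with its reduced structure as an integral projective surface, and write
\[
d \;=\; (-K_X)^2\cdot F \;=\; \sum_i m_i\,(-K_X)^2\cdot[F_i] \;=\; \sum_i m_i\,\bigl(-K_X|_{F_i}\bigr)^2 .
\]
Since $-K_X$ is $\pi$-ample and $F_i$ is contracted to a point, the restriction $-K_X|_{F_i}$ is an ample Cartier divisor on $F_i$. The key elementary input is that an ample Cartier divisor $A$ on an integral projective surface satisfies $A^2\ge 1$: the number $A^2$ is an integer because $A$ is Cartier, and it is strictly positive by Nakai--Moishezon, so $A^2\ge 1$. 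Substituting $(-K_X|_{F_i})^2\ge 1$ into the displayed identity yields $d\ge \sum_i m_i$, which is the asserted bound.

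For the case $d=1$ I would pass to the algebraic closure $\bar k$. Base change preserves the Gorenstein and canonical properties (the latter because we are in characteristic $0$) as well as the degree, so $X_{\bar k}\longrightarrow C_{\bar k}$ is again a generalised Gorenstein del Pezzo fibration of degree $1$, and any geometric fiber is a scheme fiber over a $\bar k$-point. Applying the bound $\sum_i m_i\le 1$ there forces a single component with multiplicity $1$, so the geometric fiber is reduced and irreducible.

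I expect the main obstacle to be the bookkeeping that keeps both ingredients genuinely integral. On one side this is the constancy of $(-K_X)^2\cdot F$, which rests squarely on $-K_X$ being Cartier, i.e.\ on the Gorenstein hypothesis; on the other side it is the lower bound $(-K_X|_{F_i})^2\ge 1$ for the possibly non-normal and singular surface components $F_i$. Without the Cartier property this last number could be a proper fraction, and the whole estimate would collapse, so the Gorenstein assumption must be used essentially at both places rather than merely for convenience.
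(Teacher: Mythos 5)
Your proposal is correct and follows essentially the same route as the paper: compute $(-K_X)^2\cdot F=d$ via adjunction and constancy across fibers, expand over the components $F_i$, and use that an ample Cartier divisor on an integral projective surface has self-intersection at least $1$. The paper's proof is just a terser version of yours (it likewise reduces to the algebraically closed case and leaves the flatness/constancy step implicit), so the extra justifications you supply are welcome but not a different argument.
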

\begin{proof}
We may assume the ground field to be algebraically closed. By the adjunction formula $ K_X |_F = K_F $, and we have $$d=(-K_F)^2=(-K_X)^2\cdot F = (-K_X)^2 \cdot \sum m_i F_i =\sum m_i (-K_F |_{F_i})^2 \geq \sum m_i,$$ the last inequality holds since $-K_F$ is an ample Cartier divisor on $F_i$. 
\end{proof}

\begin{lem}[{\cite[1.22]{Alexeev-1994ge}}]
\label{lem-5}
 Suppose that the pair $(X, \HHH)$ is terminal where $\HHH$ is a linear system without fixed components. Then $\HHH$ has at worst isolated non-singular base-points $P_i$ such that $\mathrm{mult}_{P_i} \HHH = 1$. 
\end{lem}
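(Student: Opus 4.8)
The plan is to determine $\mathrm{Bs}\,\HHH$ codimension by codimension, using the characterisation that $(X,\HHH)$ is terminal precisely when $a(E,X)>\mathrm{mult}_E\HHH$ for every divisor $E$ over $X$, together with the elementary remark that if the centre of $E$ meets $\mathrm{Bs}\,\HHH$ then a general member of $\HHH$ passes through that centre, so $\mathrm{mult}_E\HHH>0$. I may assume the ground field algebraically closed, since passing to the closure affects neither the singularities of the pair nor the base scheme, and the $G$-action only permutes the resulting base points. Because $\HHH$ has no fixed component, $\mathrm{Bs}\,\HHH$ already has codimension at least two.

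First I would exclude one-dimensional components. Suppose $Z\subset\mathrm{Bs}\,\HHH$ is a curve. Since a three-dimensional terminal $X$ has isolated singularities, $X$ is smooth at the generic point of $Z$ and $Z$ is smooth there, so the divisor $E$ extracted by blowing up $Z$ has $a(E,X)=\mathrm{codim}\,Z-1=1$. A general member $D\in\HHH$ contains $Z$, whence $\mathrm{mult}_E\HHH\ge 1$ and $a(E,X)-\mathrm{mult}_E\HHH\le 0$, contradicting terminality. Thus $\mathrm{Bs}\,\HHH$ contains no curve, and as $\dim X=3$ it is a finite set of points $P_i$.

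Next I would show that each $P_i$ is a smooth point, after which the multiplicity is pinned down. If $P_i$ were singular, then by the classification of three-dimensional terminal singularities there is a divisor $E$ centred at $P_i$ with $a(E,X)\le 1$ — discrepancy exactly $1$ at a Gorenstein (compound Du Val) point, and minimal discrepancy $1/r<1$ at a point of index $r\ge 2$ — whereas over a smooth point every divisor has discrepancy at least $\dim X-1=2$. As $P_i\in\mathrm{Bs}\,\HHH$ and $E$ is centred there, the general member meets $E$ with multiplicity at least $a(E,X)$, so the discrepancy of the pair along $E$ is non-positive, contradicting terminality. Hence each $P_i$ is non-singular; blowing it up gives $E$ with $a(E,X)=2$, and terminality together with $P_i\in\mathrm{Bs}\,\HHH$ forces $1\le\mathrm{mult}_{P_i}\HHH<2$, i.e.\ $\mathrm{mult}_{P_i}\HHH=1$.

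The hard part is this exclusion of singular base points: it rests on the input that a non-smooth terminal threefold point always carries a divisor of discrepancy at most $1$ (equivalently, that the minimal discrepancy attains the smooth value $\dim X-1$ only at smooth points), which is where the structure theory of terminal threefold singularities enters. The one bookkeeping subtlety to verify is the inequality $\mathrm{mult}_E\HHH\ge a(E,X)$ at such a point when the members of $\HHH$ fail to be Cartier: here one expresses the multiplicity through the index-$r$ local class group and uses that $\mathrm{ord}_E$ takes a positive integer value on any local equation vanishing at the centre, so the multiplicity is bounded below by the same $1/r$ that bounds the discrepancy above. Everywhere else the argument is a one-line discrepancy count.
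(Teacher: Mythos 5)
The paper does not prove this lemma at all: it is imported verbatim from \cite[1.22]{Alexeev-1994ge}, so there is no in-paper argument to compare yours against. Your proof is the standard one and is essentially correct: terminality of the pair means $a(E,X)>\mathrm{mult}_E\HHH$ for every exceptional divisor $E$ over $X$, the blow-up of a curve in the base locus (where $X$ is necessarily smooth, terminal threefold singularities being isolated) gives $a(E,X)=1$ against $\mathrm{mult}_E\HHH\ge 1$, and at a smooth base point the ordinary blow-up gives $2>\mathrm{mult}_P\HHH\ge 1$, forcing multiplicity exactly $1$ since multiplicity at a smooth point is an integer. The only step that genuinely needs an external input is the exclusion of singular base points, and you identify it correctly: one needs that a singular terminal threefold point of index $r$ carries a divisor of discrepancy exactly $1/r$ (Markushevich for the Gorenstein case, Kawamata for $r\ge 2$), whereas the smooth minimal discrepancy is $2$. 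For the matching lower bound $\mathrm{mult}_E\HHH\ge 1/r$ your appeal to the ``index-$r$ local class group'' should be stated more carefully: the local analytic class group of a terminal point need not be $r$-torsion (the ordinary double point has class group $\mathbb{Z}$), so the correct justification is that a general member $D$ of $\HHH$ must be $\mathbb{Q}$-Cartier for the discrepancies of the pair to be defined at all, and the torsion subgroup of the local analytic class group of a terminal threefold point is cyclic of order $r$ generated by $K_X$; hence $rD$ is Cartier near the point and $\mathrm{ord}_E(D)=\tfrac1r\,\mathrm{ord}_E(rD)\ge \tfrac1r$. (In the situations where the paper applies the lemma this is automatic, since there $\HHH\sim -K_X$ locally near its base points.) With that justification supplied, the argument is complete.
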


\begin{lem}[{\cite[1.23]{Alexeev-1994ge}}]
\label{lem-6}

Suppose that $X$ has only terminal singularities and the pair $(X, \HHH)$ is canonical. Then in the neighbourhood of any base-point $P$ of $\HHH$ we have $\HHH \sim -K_X$.
\end{lem}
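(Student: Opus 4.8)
\emph{The plan is to argue entirely locally near $P$.} The assertion $\HHH \sim -K_X$ concerns the coincidence of two Weil divisor classes in a neighbourhood of $P$, so I would first shrink $X$ to a small (analytic or \'etale) neighbourhood $U$ of $P$, on which every Cartier divisor is principal. Thus ``$\HHH\sim -K_X$ near $P$'' is equivalent to the equality of the classes of a general member $H\in\HHH$ and of $-K_X$ in the local class group $\mathrm{Cl}(U)=\mathrm{Cl}(X,P)$. If $P$ is a smooth point of $X$ this group is trivial and both classes are $0$, so there is nothing to prove; the only substantive case is that $P$ is a singular, hence non-Gorenstein, terminal point, and I would concentrate there.

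Next I would invoke the structure of terminal threefold points: for a terminal singularity of index $r$ the local class group $\mathrm{Cl}(X,P)$ is cyclic of order $r$ and generated by the class of $K_X$, where $r$ is the least positive integer with $rK_X$ Cartier. Writing $[H]=t\,[-K_X]$ for some $t\in\mathbb{Z}/r$, the lemma becomes the purely numerical claim $t\equiv 1\pmod r$, i.e. that $K_X+H$ is Cartier in a neighbourhood of $P$.

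To pin down $t$ I would pass through adjunction. Since $\HHH$ has no fixed components a general member $H$ is normal, and since $(X,\HHH)$ is canonical the pair $(X,H)$ is canonical; hence $H$ has canonical surface, i.e. Du Val, singularities at the base point $P$. Du Val singularities are Gorenstein, so $K_H$ is Cartier at $P$, and by adjunction $K_H=(K_X+H)|_H$, so the restriction of $K_X+H$ to $H$ is Cartier. It then remains to lift this to $X$: I would show that the restriction homomorphism $\mathrm{Cl}(X,P)\to\mathrm{Cl}(H,P)$ carries the class $(t-1)[-K_X]$ of $K_X+H$ to the zero (Cartier) class only when it is already zero, which forces $t\equiv 1$. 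Equivalently, and more concretely, one can run the discrepancy estimate directly: choosing a divisorial valuation $E$ over $P$ of minimal discrepancy $a(E,X)$, the fact that $P$ is a base point gives $\mathrm{mult}_E\HHH>0$, while canonicity gives $\mathrm{mult}_E\HHH\le a(E,X)$; matching this narrow window against the weighted multiplicity of a general member of class $t[-K_X]$ shows that only the anticanonical class $t\equiv 1$ is admissible.

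\emph{I expect the main obstacle to be precisely this final lifting/comparison step.} Deducing that the Du Val property of $H$ forces $K_X+H$ to be Cartier on $X$ — equivalently, controlling the restriction map on local class groups, or carrying out the weighted-multiplicity computation — is where the fine geometry of terminal threefold points intervenes, and treating it uniformly in $r$ seems to require the Mori--Reid classification of terminal singularities as cyclic quotients of $cDV$ points together with the Reid terminal lemma constraining the admissible weights. The low-index cases $r\le 2$ are essentially automatic (a Cartier base point already violates canonicity there), but the general index-$r$ case is the technical heart of the argument.
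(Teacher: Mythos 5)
The paper does not prove this lemma at all: it is quoted verbatim from \cite[1.23]{Alexeev-1994ge}, so there is no internal argument to compare yours with, and your proposal has to stand on its own. As written it does not. The structural claim on which your whole reduction rests --- that for a terminal threefold point of index $r$ the local class group $\mathrm{Cl}(X,P)$ is cyclic of order $r$ and generated by $K_X$ --- is false outside the special case of cyclic quotient points $\frac{1}{r}(1,a,r-a)$. Already for the ordinary double point $xy=zw$ (terminal, Gorenstein, index $1$) the analytic local class group is $\mathbb{Z}$ while $K_X$ is Cartier, and general terminal points of index $r$ are $\mu_r$-quotients of $cDV$ points whose class groups are typically strictly larger than $\langle K_X\rangle$. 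Consequently the class of a general member $H$ need not be of the form $t[-K_X]$, and the lemma cannot be reduced to the congruence $t\equiv 1\pmod r$. Relatedly, your dismissal ``singular, hence non-Gorenstein'' is wrong: Gorenstein terminal points are exactly the isolated $cDV$ points, they are singular, and for them the lemma makes the non-vacuous assertion that $H$ is Cartier at $P$ --- a case your scheme skips entirely.

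Beyond that, the step you yourself flag as the ``main obstacle'' is not a technicality but the entire content of the lemma. Knowing that $(K_X+H)|_H$ is Cartier on a Du Val surface $H$ does not by any general principle force $K_X+H$ to be Cartier on $X$ (the restriction map on local class groups can have a kernel, and the different in the adjunction $K_H=(K_X+H)|_H$ is invisible at an isolated point of $\operatorname{Sing}X$ on $H$, which lies in codimension $2$ in $H$); likewise the Du Val property of the general member at the base point is itself a nontrivial assertion of the same depth, not a free consequence of $(X,\HHH)$ being canonical. So the proposal is an honest plan that correctly identifies the relevant ingredients (localization, adjunction, the Mori--Reid classification) but proves neither the reduction nor the decisive step; for a usable argument you should either reproduce Alexeev's proof or supply the index/covering computation at an arbitrary terminal point rather than only at a cyclic quotient.
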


\section{Birational rigidity}
\label{section-2}

The following lemma is a variant of birational rigidity of degree $1$ del Pezzo surfaces (cf. \cite[1.6]{Is-1996}).

\begin{lem}
\label{lem-15}
Let $S$ be a non-singular del Pezzo surface of degree $1$, and let $T$ be either a normal del Pezzo surface of degree $d$ or a conic bundle over a non-singular curve. Let $f: S \dashrightarrow T$ be a birational map between them. Put 
$$\HHH =
\begin{cases}
| - K_T | ,\ \text{if } d \geq 3, \\
| - 2 K_T | ,\ \text{if } d =2, \\
| - 3 K_T |,\ \text{if } d =1, \\
| nF |, \text{where $F$ is the class of a fiber if T is a conic bundle, $n\geq 1$.}
\end{cases}$$
Suppose that $\LLL : = f^{-1}_* \HHH \subset | - a K_S | $ for some positive integer $a$. Then $T$ is a degree $1$ del Pezzo surface and $f$ is an isomorphism. 
\end{lem}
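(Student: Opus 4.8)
The plan is to run the classical Noether--Fano rigidity argument (following \cite{Is-1996} and \cite{Corti-1995}), with the hypothesis $\LLL\subset|-aK_S|$ and the equality $(-K_S)^2=1$ supplying the numerical engine. The statement is purely geometric and involves no group action, so I would first pass to the algebraic closure of the ground field and assume $S$, $T$ and $f$ to be defined over $\bar k$. I would then fix a common resolution $p\colon W\to S$, $q\colon W\to T$ of the map $f$ and let $\mathcal M$ be the mobile linear system on $W$ which is simultaneously the strict transform $p^{-1}_*\LLL=q^{-1}_*\HHH$; after sufficiently many further blow-ups I may assume $\mathcal M$ is base-point free, hence nef, so $\mathcal M^2\ge 0$.

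The decisive input is a bound on the multiplicities of $\LLL$. For two general members $L_1,L_2\in\LLL$ one has $L_i\equiv -aK_S$, so $L_1\cdot L_2=a^2(-K_S)^2=a^2$ (this is exactly where degree $1$ is used), and therefore $\sum_x(\mathrm{mult}_x\LLL)^2\le L_1\cdot L_2=a^2$, giving $\mathrm{mult}_x\LLL\le a$ at every closed point $x\in S$. I would then use the elementary two-dimensional fact that this bound forces $(S,\tfrac1a\LLL)$ to be canonical: blowing up any $x$, the exceptional discrepancy is $1-\tfrac1a\mathrm{mult}_x\LLL\ge0$, the strict transform again has multiplicity $\le\LLL\cdot E=\mathrm{mult}_x\LLL\le a$ at every point of $E$, and $E$ enters the log-pullback with nonnegative coefficient, so all infinitely near discrepancies stay $\ge 0$ by induction. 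Since $\LLL\subset|-aK_S|$ we also have $K_S+\tfrac1a\LLL\simQ 0$.

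Finally I would invoke the Noether--Fano criterion. Each test system $\HHH$ is base-point free with smooth general member, so the pair $(T,\tfrac1m\HHH)$ (with $m=1,2,3$ for $d\ge3,\,d=2,\,d=1$, and the fibre normalization in the conic-bundle case) is canonical; the criterion then yields the dichotomy that either $f$ is an isomorphism of Mori fibre spaces, or $(S,\tfrac1a\LLL)$ is not canonical. The previous paragraph excludes the second alternative, so $f$ must be an isomorphism of Mori fibre spaces. A degree $1$ del Pezzo surface is a Mori fibre space over a point, hence cannot be isomorphic to a conic bundle over a curve; this rules out the conic-bundle case. In the remaining case $T$ is a del Pezzo surface and $f$ is biregular, whence $d=K_T^2=K_S^2=1$, so $T$ is a del Pezzo surface of degree $1$, as asserted.

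The step I expect to be most delicate is the honest verification of the Noether--Fano dichotomy, namely that a non-isomorphism $f$ necessarily produces a maximal (non-canonical) singularity of $\LLL$ on $S$. This requires careful tracking of discrepancies on $W$ and, in particular, the analysis at the unique base point of the anticanonical pencil $|-K_S|$ together with its infinitely near points, where one cannot test multiplicities against a transverse member of $|-K_S|$ and must instead exploit the elliptic structure of the pencil; the conic-bundle subcase additionally demands the correct relative normalization of $(T,\tfrac1m\HHH)$. Once this dichotomy is granted, the multiplicity bound $\mathrm{mult}_x\LLL\le a$ closes all the cases simultaneously.
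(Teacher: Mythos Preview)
Your approach is correct but takes a genuinely different route from the paper. The paper proceeds by a completely elementary direct computation: it resolves $f$ by blowing up the base points of $\LLL$ with multiplicities $r_i$, records the two identities
\[
\HHH^2=a^2-\sum r_i^2,\qquad \HHH\cdot K_T=-a+\sum r_i,
\]
and then plugs in the explicit values of $\HHH^2$ and $\HHH\cdot K_T$ for each of the four test systems. Each case yields a tiny Diophantine system which is solved by hand: for $d\ge 3$, $d=2$, and the conic bundle one obtains an immediate contradiction; for $d=1$ one finds $r_i=0$ and $a=3$, so $f$ is a morphism, and comparing $K_S$ with $f^*K_T$ shows there are no contracted curves. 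No singularity thresholds or Noether--Fano machinery appear.

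Your argument, by contrast, uses only the first identity (in the form $\sum r_i^2\le a^2$) to bound multiplicities and deduce that $(S,\tfrac1a\LLL)$ is canonical, then delegates the rest to the Noether--Fano/Sarkisov dichotomy. This is conceptually cleaner and explains \emph{why} the lemma holds (it is the rigidity of degree~$1$ del Pezzos), but it trades the paper's one-line arithmetic in each case for the ``delicate step'' you yourself flag: one must state and verify the criterion in exactly this setup, including the conic-bundle case where $\HHH=|nF|$ is not ample and the standard very-ample normalization of Corti's theorem does not literally apply. In effect, the second intersection identity above is the entire content of that criterion here, and the paper simply uses it directly rather than through a black box. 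Your canonicity argument for $(S,\tfrac1a\LLL)$ is fine; nothing is wrong with the plan, it is just heavier than necessary for a two-dimensional statement that the paper dispatches with bare intersection numbers.
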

\begin{proof}
All the properties in the claim can be checked over an algebraically closed field, so we assume that. Consider a resolution of the points of indeterminacy of $f$
\[
\xymatrix{
& Z \ar[dl]_g \ar[dr]^h &
\\
S \ar@{-->}[rr]^{f} & & T 
} 
\]
Consider the case where $T$ is a del Pezzo surface. Since in this case $\HHH$ is very ample (see Proposition \ref{claim-12}) $h$ is the  blow up of the base locus of $\LLL$. It is easy to check (see \cite[1.3.2]{Is-1996}) that for the strict transform $\widetilde{\LLL} := g^{-1}_* \LLL $ we have
$$ \widetilde{\LLL}^2 = \LLL^2 - \sum r_i ^2$$
$$ K_Z \cdot \widetilde{\LLL} = K_S \cdot \LLL + \sum r_i$$
where $r_i$ are the multiplicities of the base points of $\LLL$ (including infinitely near ones). Notice that $\widetilde{\LLL} = h^{-1}_* \HHH$, and since $\HHH$ is base point free we get 
$$ \HHH^2 = \widetilde{\LLL}^2 = \LLL^2 - \sum r_i ^2 = a^2 - \sum r_i^2 $$
$$ \HHH \cdot K_T = K_Z \cdot \widetilde{\LLL} = - a + \sum r_i $$

Consider the cases:

1) $d\geq 3$. We get
$$a^2 = \sum r_i^2 + d, \ \ \ a = \sum r_i + d.$$
It follows that $r_i= 0$ for any $i$ and $d = 1$ which contradicts the assumption. 

2) $d = 2$. We get
$$a^2 = \sum r_i^2 + 8, \ \ \ a = \sum r_i + 4.$$
These equations easily lead to a contradiction.

3) $d = 1$. Then we get the equations
$$a^2 = \sum r_i^2 + 9, \ \ \ a = \sum r_i + 3.$$
We deduce that all the $r_i = 0$, hence the map $f$ is a morphism and $a = 3$. Thus $f$ is a contraction of the exception divisor $E$. Hence $$K_S = f^* K_T + E$$
On the other hand, 
$aK_S = f^*K_T$. Hence $(1 - a)K_S = E$ which is absurd. We see that there are no contracted curves and $f$ is an isomorphism.

Now consider the case when $T$ is a conic bundle over a non-singular curve. That is there is a morphism $\tau: T \longrightarrow B$ whose general fiber is a non-singular conic. 
We have $\HHH^2 = (nF)^2 = 0$ and $\HHH\cdot K_T = nF\cdot K_T = -2n$ by adjunction. Considering the resolution of the base points of $\LLL$ we can write the formulas as above and get $$ a^2 = \sum r_i^2, \ \ \ a = \sum r_i +2n. $$ Again, it is easy to derive a contradiction.

We see that only the case 3) can occur, and the claim follows. 
\end{proof}

The next proposition gives a generalization of the rigidity property to degree~$1$ del Pezzo fibrations. 

\begin{proposition}
\label{claim-19}
Let $X$ be a projective three-dimensional $G$-variety and $C$ be a non-singular $G$-curve. Let $\pi \colon X \longrightarrow C$ be a $G$-morphism whose generic fiber is a non-singular degree $1$ del Pezzo  surface $X_\eta$, and $\mathrm{Pic}^G (X/C)$ is generated by $-K_X$ and $G$-components of fibers. Suppose that $X$ is $G$-birational over $C$ to a generalised $G\mathbb{Q}$-Fano\textendash Mori fibration $\pi': X' \longrightarrow B$ over $C$, that is the following diagram is commutative
\[
\xymatrix{
X \ar@{-->}[r]^\chi \ar[d]^{\pi} & X'\ar[d]_{\pi'}
\\
C& \ar@{->}[l]_{\psi} B
} 
\]
Then $\psi$ is an isomorphism, $X'$ is a $G\mathbb{Q}$-del Pezzo fibration of degree $1$ and $X_\eta \simeq X'_\eta$. Here $X'_\eta$ is the generic fiber of $X'$ over $C$.
\end{proposition}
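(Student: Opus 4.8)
The plan is to reduce the whole statement to the birational rigidity of the generic fibre and then invoke Lemma \ref{lem-15}. Restricting $\chi$ over the generic point $\eta \in C$ produces a birational map $\chi_\eta \colon X_\eta \dashrightarrow X'_\eta$ over $K := k(C)$, where $X'_\eta$ is the generic fibre of the composition $X' \to B \xrightarrow{\psi} C$; note $\dim X'_\eta = 2$. The first thing I would do is extract from the Picard hypothesis the statement I actually need on the fibre. Components of fibres of $\pi$ do not meet $X_\eta$, and any class numerically trivial over $C$ restricts to a numerically (hence, on the surface $X_\eta$, linearly) trivial class; therefore the surjection $\mathrm{Pic}^G(X) \twoheadrightarrow \mathrm{Pic}^G(X_\eta)$ carries all the generators to $0$ except $-K_X \mapsto -K_{X_\eta}$. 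Hence $\mathrm{Pic}^G(X_\eta) = \mathbb{Z}\langle -K_{X_\eta}\rangle$, and since $-K_{X_\eta}$ is ample, every nonzero effective $G$-invariant linear system on $X_\eta$ lies in $|-aK_{X_\eta}|$ for some integer $a \ge 1$.

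Next I would pin down the geometry of $X'_\eta$. As $\chi$ is birational over $C$, the morphism $X' \to C$ is dominant, so $\psi$ is surjective and $\dim B \in \{1,2\}$ (if $\dim B = 0$ then $X' \to C$ would be constant). If $\dim B = 2$, then $X' \to B$ is a conic bundle and $\tau \colon X'_\eta \to B_\eta$ is a conic bundle over the curve $B_\eta$; all fibres of $\tau$ are linearly equivalent, so a suitable positive multiple of the fibre class $F$ is $G$-invariant and defined over $K$, and I take $\HHH \subseteq |nF|$ to be the corresponding base-point-free system. If $\dim B = 1$, then $\psi$ is a finite surjective morphism of curves and $X'_\eta$ is the generic del Pezzo fibre of $X' \to B$, smooth because $X'$ is terminal, of some degree $d'$; I take $\HHH = |-K_{X'_\eta}|$, $|-2K_{X'_\eta}|$ or $|-3K_{X'_\eta}|$ according as $d' \ge 3$, $d' = 2$ or $d' = 1$. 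In each case $\HHH$ is a $G$-invariant base-point-free system, so $\LLL := (\chi_\eta^{-1})_* \HHH$ is a nonzero effective $G$-invariant system on $X_\eta$, whence $\LLL \subseteq |-aK_{X_\eta}|$ with $a \ge 1$ by the previous paragraph.

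Now I would run Lemma \ref{lem-15}, after base change to $\overline{K}$ (base-point-freeness, linear-equivalence classes, and being an isomorphism all descend, and $X_{\eta,\overline{K}}$ is a smooth degree $1$ del Pezzo surface). In the case $\dim B = 1$ I would first exclude $\deg\psi > 1$: if $[k(B):K] = n > 1$, then $X'_\eta \times_K \overline{K}$ is a disjoint union of $n$ del Pezzo surfaces, so $X'_\eta$ is not geometrically irreducible, contradicting that it is $K$-birational to the geometrically irreducible $X_\eta$; thus $n = 1$, and since $C$ is smooth (hence normal) and $\psi$ is finite birational, $\psi$ is an isomorphism. With the hypothesis $\LLL \subseteq |-aK_{X_\eta}|$ verified, Lemma \ref{lem-15} forces $X'_\eta$ to be a del Pezzo surface of degree $1$ with $\chi_\eta$ an isomorphism; in particular the conic-bundle alternative ($\dim B = 2$) and the degrees $d' \ge 2$ are ruled out. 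Descending, $X_\eta \simeq X'_\eta$ over $K$, $\psi \colon B \xrightarrow{\sim} C$, and $\pi' \colon X' \to C$, being already a $G\mathbb{Q}$-Fano--Mori fibration over the curve $C$ with degree $1$ generic fibre, is a $G\mathbb{Q}$-del Pezzo fibration of degree $1$.

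I expect the main obstacle to be the containment $\LLL \subseteq |-aK_{X_\eta}|$: it requires carefully propagating the equivariant relative Picard hypothesis to the generic fibre and checking that the transported system is genuinely $G$-invariant with anticanonical class — in the conic-bundle case this rests on the fibre class being $G$-invariant — as well as matching the four alternatives of Lemma \ref{lem-15} to the possible Fano--Mori structures on $X'_\eta$ and disposing of the $\deg\psi > 1$ possibility by geometric irreducibility. Once these points are secured, the numerical inequalities inside Lemma \ref{lem-15} finish the argument.
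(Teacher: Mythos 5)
Your proposal is correct and follows essentially the same route as the paper: both arguments restrict $\chi$ to the generic fibre, use the hypothesis on $\mathrm{Pic}^G(X/C)$ to show that the transported polarization of $X'_\eta$ (anticanonical multiple in the del Pezzo case, a $G$-averaged fibre class in the conic-bundle case) lies in $|-aK_{X_\eta}|$, and then invoke Lemma \ref{lem-15} to kill all alternatives except a degree $1$ del Pezzo surface with $\chi_\eta$ an isomorphism. Your exclusion of $\deg\psi>1$ via geometric irreducibility is just a rephrasing of the paper's connectedness-of-fibres-plus-normality-of-$B$ argument, so there is no substantive divergence.
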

\begin{proof}
The map $\chi$ induces a birational map $f: X_\eta \dashrightarrow X'_\eta$. 
Suppose that $B$ is a curve. Since the diagram is commutative, the fibers of $\pi$ and $\pi'$ are connected and $B$ is normal we get that $\psi$ is an isomorphism. Then $X'_\eta$ is a del Pezzo surface over the function field of $C$. We want to apply Lemma \ref{lem-15}. 

First consider the case when $X'_\eta$ is a degree $1$ del Pezzo surface. By adjunction we have $-3K_{X'} |_{X'_\eta} = - 3 K_{X'_\eta}$. The class of $-3K_{X'}$ is $G$-invariant in $\mathrm{Pic} (X'/C)$. Since $\chi$ is a $G$-map, the class of $\chi^{-1}_* ( -3K_{X' })$ is also $G$-invariant in $\mathrm{Pic} (X/C)$. Hence it has the form $-a K_X + D$ for some $a$ and some divisor $D$ concentrated in the fibers. Since $(-aK_{X} + D) |_{X_\eta} = - a K_{X_\eta}$ we have $f^{-1}_* ( 3 K_{X'_\eta} ) = - a K_{X_\eta}$, and the conditions of Lemma \ref{lem-15} are satisfied. Thus $f$ is an isomorphism. 


If $X'_\eta$ is a del Pezzo surface of degree $d\geq 2$ a similar argument yields a contradiction. 

Now suppose that $B$ is a surface in which case $X'_\eta$ as a surface over the function field of $C$ admits a conic bundle structure. Consider the divisor class $F_\eta$ on $X'_\eta$ corresponding to the generic fiber of the map $\pi'|_{X'_\eta}: X'_\eta \longrightarrow B_\eta$. Clearly there is a divisor $F$ on $X'$ such that $F|_{X'_\eta} = F_\eta$. Put $F'=\sum_{g\in G} g. F$. Since $G$ sends a fiber of $\pi'$ to a fiber of $\pi'$ we have $(g.F)|_{X'_\eta} = F_\eta$ for any $g \in G$. Hence $F'$ is $G$-invariant in $\mathrm{Pic}\ X$ and $F'|_{X'_\eta} = n F_\eta$ where $n = |G|$. Since $\chi$ is a $G$-map, $\chi^{-1}_*F'$ is $G$-invariant as well, so it has the form $-aK_X + D$ for some $a$ and some divisor $D$ concentrated in the fibers of $\pi$. Hence $f^{-1}_* (n F_\eta) = -aK_{X_\eta}$, so the conditions of Lemma~\ref{lem-15} are satisfied, and we get a contradiction.
\end{proof}

\section{Canonical model}
\label{section-3}

First we construct a model that is a $G\mathbb{Q}$-del Pezzo fibration in the sense of Definition \ref{defin-1}.

\begin{proposition}
\label{claim-3}
Let $X$ be a projective three-dimensional $G$-variety and $C$ be a $G$-curve. Let $\pi \colon X \longrightarrow C$ be a proper $G$-morphism whose generic fiber is a non-singular degree $1$ del Pezzo surface $X_\eta$, and $\mathrm{Pic}^G (X/C)$ is generated by $-K_X$ and $G$-components of fibers. Then there exists a $G\mathbb{Q}$-del Pezzo fibration $\pi' \colon X' \longrightarrow C'$ with the generic fiber $X'_\eta$ isomorphic to $X_\eta$ such that the following diagram is commutative: 
\[
\xymatrix{
X\ar@{-->}[r]^\chi\ar@{->}[d]^{\pi} & X'\ar[d]^{\pi'}
\\
C \ar@{-->}[r] & C'
} 
\]
\end{proposition}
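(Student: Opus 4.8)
The plan is to produce the required fibration by running the $G$-equivariant relative Minimal Model Program over $C$ and then identifying the resulting Mori fibre space by means of the rigidity statement of Proposition \ref{claim-19}. Before doing so I would perform a few harmless reductions. Since $X_\eta$ is a non-singular del Pezzo surface over $k(C)$ it is geometrically integral, so its general fibres are connected; replacing $C$ by its normalization (a non-singular $G$-curve) and passing to the Stein factorization changes neither the generic fibre nor the hypothesis on $\mathrm{Pic}^G(X/C)$, and lets me assume that $C$ is non-singular and that $\pi$ has connected fibres.

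Next I would apply $G$-equivariant resolution of singularities, valid in characteristic $0$, to replace $X$ by a non-singular projective $G$-variety $\hat X$ with a proper $G$-morphism $\hat\pi\colon \hat X \to C$ that is $G$-birational to $\pi$ over $C$; its generic fibre $\hat X_\eta$ is obtained from $X_\eta$ by blow-ups, hence is a non-singular rational surface of Kodaira dimension $-\infty$. Then I would run the $G$-equivariant $K_{\hat X}$-MMP over $C$, whose existence and termination hold in dimension three over a base. This program cannot end with a relative minimal model: if the output $X'\to C$ had $K_{X'}$ relatively nef, then $K_{X'_\eta}$ would be nef on the generic fibre, which is a non-singular projective surface birational to $X_\eta$ and thus of Kodaira dimension $-\infty$, and such a surface never carries a nef canonical class. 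Hence the MMP terminates with a $G$-Mori fibre space $\pi'\colon X' \to B$ over $C$, where $X'$ has terminal $\mathbb{Q}$-factorial singularities, $-K_{X'}$ is $\pi'$-ample and $\rho^G(X'/B)=1$; the morphism to $C$ factors as $X' \xrightarrow{\pi'} B \xrightarrow{\psi} C$, and $X'$ is $G$-birational to $X$ over $C$. Thus $\pi'\colon X'\to B$ is a $G\mathbb{Q}$-Fano--Mori fibration over $C$ of the kind considered in Proposition \ref{claim-19}.

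Finally I would invoke Proposition \ref{claim-19}, applied to the original fibration $\pi\colon X\to C$ (whose generic fibre is the degree $1$ del Pezzo surface $X_\eta$) together with the map $\chi\colon X \dashrightarrow X'$ built from the inverse resolution followed by the MMP. All its hypotheses hold, so it yields that $\psi\colon B\to C$ is an isomorphism, that $X'$ is a $G\mathbb{Q}$-del Pezzo fibration of degree $1$, and that the induced birational map $X_\eta \dashrightarrow X'_\eta$ is an isomorphism. Setting $C'=B$ and reading the base map $C\dashrightarrow C'$ as the isomorphism $\psi^{-1}$, the morphism $\pi'\colon X'\to C'$ is the desired $G\mathbb{Q}$-del Pezzo fibration fitting into the required commutative diagram.

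The resolution and the relative $G$-MMP are routine inputs; the two steps demanding care are the termination argument (the $\kappa=-\infty$ obstruction to a relative minimal model) and, above all, the appeal to Proposition \ref{claim-19}. The latter is where the degree $1$ hypothesis is essential: it is precisely the rigidity of degree $1$ del Pezzo surfaces (Lemma \ref{lem-15}) that forbids $B$ from being a surface, thereby excluding the conic bundle case, and that forces $X'$ to remain a degree $1$ del Pezzo fibration over the curve $C$ rather than degenerating to higher degree or to a different base. I therefore expect this application to be the main obstacle, the rest of the construction being the standard relative MMP machinery.
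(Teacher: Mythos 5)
Your proposal is correct and follows essentially the same route as the paper: equivariant resolution, the relative $G$-MMP over the base, and then Proposition \ref{claim-19} to identify the resulting Mori fibre space as a degree $1$ del Pezzo fibration over $C$ with unchanged generic fibre. The only substantive difference is a preliminary step you elide: the hypotheses do not assume $X$ projective or $C$ complete (which is why the conclusion only asserts a rational map $C \dashrightarrow C'$), so before you can speak of a projective resolution $\hat X$ mapping onto $C$ you must first take $G$-equivariant compactifications of $X$ and of $C$ (Sumihiro), normalize the compactified curve, and resolve the indeterminacy of the induced map to it -- this is exactly how the paper begins, and without it the phrase ``non-singular projective $\hat X$ with a proper morphism onto $C$'' is vacuous for non-complete $C$. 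Your explicit Kodaira-dimension argument that the MMP cannot terminate in a relative minimal model is a point the paper leaves implicit, and is a welcome addition; otherwise the two arguments coincide.
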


\begin{proof}
Let $j:~C_1 \longrightarrow C$ be a normalization. Consider the following commutative diagram
\[
\xymatrix{
X \ar@{->}[d]^{\pi} \ar@{-->}[dr]^{\pi_1} &
\\
C \ar@{<-}[r]^{j} & C_1
} 
\]
where the map $\pi_1:= \pi \circ i^{-1}$ may be not defined over some points of $C_1$. 

Let $X_1$ be a $G$-equivariant resolution of singularities (see \cite[Theorem 0.1]{AW-1997}) of $X$ and the indeterminacy points of $\pi_1$:
\[
\xymatrix{
X\ar@{<-}[r]^{h} \ar@{->}[d]^{\pi} & X_1 \ar@{->}[d]^{\pi_2}  
\\
C \ar@{-->}[r] & C_1  
} 
\]

Since a general fiber of $\pi_1$ is non-singular $h$ does not change it. Apply the $G$~-MMP (see \cite[0.3.14]{Mori-1988}) over $C_1$ to the variety $X_1$. We get the following diagram of $G$-maps where the map $g$ is a composition of flips and divisorial contractions
\[
\xymatrix{
X_1 \ar@{-->}[r]^g \ar[d]^{\pi_2} & X'\ar[d]_{\pi'}
\\
C_1& \ar@{->}[l]_{\psi} B
} 
\]

Put $\chi = g \circ h^{-1}$. The map $\chi$ induces a birational map of the generic fibers $f: X_\eta \dashrightarrow X'_\eta$ where $X'_\eta$ is the generic fiber of the morphism $\psi \circ \pi'$. Then Proposition \ref{claim-19} shows that $f$ and $\psi$ are isomorphisms, and the claim follows. 
\end{proof}



Notice that the assumption on $\mathrm{Pic}^G (X/C)$ in the above proposition is a relaxation of property \ref{defin-1a} in the Definition \ref{defin-1}. Now we are ready to construct a canonical model. 

\begin{thm}
\label{theorem-7}
Let $\pi \colon X \longrightarrow C$ be a $G\mathbb{Q}$-del Pezzo fibration of degree $1$. Then there exist a $G\mathbb{Q}$-del Pezzo fibration $\bar{\pi} \colon \bar{X} \longrightarrow C$ of degree $1$ with the generic fiber $\bar{X}_\eta \simeq X_\eta$ and a commutative diagram
\[
\xymatrix{
X \ar@{-->}[rr]^{h}\ar[dr]^{\pi} && \bar{X} \ar[dl]_{\bar{\pi}} 
\\
& C  &
} 
\]
where the map $h$ is birational and the pair $(\bar{X}, \bar{\HHH})$ is canonical where ${\bar{\HHH}=| - K_{\bar{X}}+\bar{\pi}^*\bar{D} |}$ for some ample $G$-invariant divisor $\bar{D}$ on $C$. The pair $(\bar{X}, \bar{\HHH})$ is called a \emph{canonical model} of $(X, \HHH)$.
\end{thm}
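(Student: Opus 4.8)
The plan is to realise $(\bar X, \bar\HHH)$ as the canonical model of the pair $(X, \HHH)$ carried out inside the category of $G\mathbb{Q}$-del Pezzo fibrations. The starting observation is that a general member $H \in \HHH = |-K_X + \pi^* D|$ satisfies $K_X + H \sim \pi^* D$, so that $K_X + \HHH \simC 0$ and in particular $K_X + \HHH \simQC 0$. Combined with the remark preceding Lemma \ref{lem-5} --- that $\HHH$ restricts surjectively to a general fibre with a single simple base point, so that $(X, \HHH)$ is canonical away from finitely many fibres --- this says that the whole construction takes place over a neighbourhood of the finitely many ``bad'' points of $C$ and leaves the generic fibre untouched.

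First I would pass to a $G$-equivariant log resolution $f \colon W \to X$ of the pair, chosen so that $\mathcal M_W := f^{-1}_* \HHH$ is base-point free and the exceptional locus is a normal crossing divisor. Writing $K_W + \mathcal M_W = f^*(K_X + \HHH) + \sum_i b_i E_i$ with $b_i = a(E_i, X) - \mathrm{mult}_{E_i}\HHH$, the pair $(X, \HHH)$ is canonical precisely when all $b_i \ge 0$, and since $K_X + \HHH \simQC 0$ we get $K_W + \mathcal M_W \simQC \sum_i b_i E_i$, a combination supported on the $f$-exceptional divisors lying over the bad fibres. I would then run the $(K_W + \mathcal M_W)$-MMP over $C$, $G$-equivariantly; this is the standard construction of a canonical model of the pair (cf. \cite{Alexeev-1994ge}, \cite{Corti-1995}). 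It terminates, does nothing over the generic point, and produces a $\mathbb{Q}$-factorial model $Y \to C$ on which $(Y, \mathcal M_Y)$ is canonical, $K_Y + \mathcal M_Y$ is nef over $C$, and $K_Y + \mathcal M_Y \simQC 0$ persists.

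Next I would bring $Y$ back into the class of $G\mathbb{Q}$-del Pezzo fibrations. Because $K_Y + \mathcal M_Y \simQC 0$, i.e. $\mathcal M_Y \simQC -K_Y$, any step of a $K_Y$-MMP over $C$ contracts a ray $R$ with $K_Y \cdot R < 0$, hence with $\mathcal M_Y \cdot R > 0$ and $(K_Y + \mathcal M_Y)\cdot R = 0$: every such step is crepant for the pair $(Y, \mathcal M_Y)$ and therefore preserves canonicity. Running this MMP over $C$ exactly as in the proof of Proposition \ref{claim-3} yields a $G\mathbb{Q}$-del Pezzo fibration $\bar\pi \colon \bar X \to C$ together with a birational $G$-map $h \colon X \dashrightarrow \bar X$ over $C$. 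By Proposition \ref{claim-19} applied to $h$, the fibration $\bar X$ has degree $1$ and $\bar X_\eta \simeq X_\eta$. Setting $\bar\HHH := \mathcal M_{\bar X}$, the pair $(\bar X, \bar\HHH)$ is canonical, and $\rho^G(\bar X/C) = 1$ forces $\bar\HHH \simQC -K_{\bar X}$; writing $\bar\HHH = |-K_{\bar X} + \bar\pi^* \bar D|$ and adding the pullback of an ample divisor on $C$ (which changes neither the mobility nor the discrepancies) lets me take $\bar D$ ample.

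The main obstacle is the middle step: verifying that the canonical model can be produced $G$-equivariantly without destroying the fibration over $C$, and in particular that $K_Y + \mathcal M_Y \simQC 0$ survives the process (so that the subsequent $K_Y$-MMP is genuinely crepant and the canonicity established on $Y$ is not lost on the way to $\bar X$). Concretely one must check that every exceptional divisor extracted or contracted over the bad fibres carries discrepancy coefficient $0$ for the pair, which is where the relation $K_X + \HHH \simQC 0$, the surjectivity of $\HHH$ on fibres, and Lemma \ref{lem-6} (pinning down $\HHH \sim -K$ near each base point) do the real work; the termination and $G$-equivariance of the three-dimensional MMP are then standard.
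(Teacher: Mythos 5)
Your overall strategy (log resolution, then the $(K+\mathcal M)$-MMP over $C$ to reach a relatively minimal canonical pair, then a crepant $K$-MMP down to a Mori fibre space) is a legitimate alternative to the paper's route, which instead runs the $(K+(1-\epsilon)\widetilde{\HHH})$-MMP directly to a Fano--Mori fibration and recovers canonicity at coefficient $1$ from the gap theorem for $3$-dimensional canonical thresholds \cite{Prokhorov-2008}. But your argument has a genuine gap at the very end, and it is precisely the point to which the paper devotes the second half of its proof. You establish only $\bar{\HHH}\simQC -K_{\bar X}$ and then ``write'' $\bar\HHH=|-K_{\bar X}+\bar\pi^*\bar D|$; this deduction is false in general. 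The statement requires \emph{integral} linear equivalence $\bar\HHH\sim -K_{\bar X}+\bar\pi^*\bar D$ with $\bar D$ a divisor on $C$, and the obstruction to passing from $\simQC$ to $\simC$ is the presence of multiple fibres: if $\bar\pi^*(x_i)=m_i\bar E_i$ with $m_i>1$, one can have $\bar\HHH\sim -K_{\bar X}+\bar\pi^*A+a_i\bar E_i$ with $0<a_i<m_i$, which is $\mathbb Q$-linearly equivalent over $C$ to $-K_{\bar X}$ but not of the required form. The paper excludes this by blowing up the anticanonical section $\Gamma$ at the outset so that a section $\bar\Gamma\subset\mathrm{Bs}\,\bar\HHH$ survives on $\bar X$, invoking the local statement $\bar\HHH\sim -K_{\bar X}$ near base points to see that $a_i\bar E_i$ is Cartier near $P\in\bar E_i\cap\bar\Gamma$, and deriving the contradiction $0<a_i\bar E_i\cdot\bar\Gamma=a_i/m_i<1$ with $a_i\bar E_i\cdot\bar\Gamma\in\mathbb Z$. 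Nothing in your write-up addresses this; the one sentence where you mention Lemma \ref{lem-6} uses it for discrepancies, not for this integrality problem.

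Two secondary points. First, your assertion that $K_Y+\mathcal M_Y\simQC 0$ ``persists'' on the relative minimal model is correct but not free: after the first MMP you only know $K_Y+\mathcal M_Y\simQC\sum b_iE_i$ with $b_i\ge 0$, effective, vertical and nef over $C$, and you need a Zariski-lemma type argument to conclude that such a divisor is a $\mathbb Q$-pullback from $C$; you flag this as ``the main obstacle'' but do not carry it out, and your subsequent claim that every $K_Y$-negative ray is $(K_Y+\mathcal M_Y)$-trivial depends on it. Second, running the MMP for the pair with the mobile linear system at coefficient exactly $1$ is delicate (the general fibre is a log Calabi--Yau pair there); the paper's use of the coefficient $1-\epsilon$ together with \cite{Prokhorov-2008} is designed exactly to avoid this. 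These two points are repairable; the missing multiple-fibre argument is the substantive omission.
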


\begin{proof}
We put $\HHH=| - K_X+\pi^*D |$ where $D$ is a $G$-invariant ample divisor on $C$ as in Proposition \ref{claim-4}. We may assume that the following map is surjective 
$$ \mathrm{H}^0 (X, \OOO_X (-K_X+\pi^*D)) \twoheadrightarrow \mathrm{H}^0 (
S, \OOO_S (-K_X+\pi^*D))=\mathrm{H}^0 (S, \OOO_S (-K_S)).$$
for a general fiber $S$ of the morphism $\pi$. Thus on $S$ the linear system $\HHH$ has only one simple base-point. There is a corresponding rational section of the morphism $\pi$ whose closure we will denote by $\Gamma$. Thus $\Gamma \subset \operatorname{Bs} \HHH $. 

Let $g_1 \colon X_1 \longrightarrow X$ be the blow-up of the curve $\Gamma$. We denote its exceptional divisor by $E_1$. The linear system $\HHH_1 = g_*^{-1} \HHH$ is base-point free on a general fiber of the morphism $\pi_1$.

Let $g_2  \colon \widetilde{X} \longrightarrow X_1$ be a $G$-equivariant resolution of singularities of the pair $(X_1, \HHH_1)$. We have a commutative diagram
\[
\xymatrix{
\widetilde{X} \ar[r]^{g_2}  \ar[dr]_{\widetilde{\pi}}  & X_1 \ar[d]^{\pi_1}   \ar[r]^{g_1}& X \ar[dl]^{\pi} 
\\
& C &
} 
\]
We introduce the notation
$$ g = g_2 \circ g_1, \ \ \widetilde{\HHH} = (g_2)_*^{-1} \HHH_1, \ \ \widetilde{E}_1 = (g_2)_*^{-1} E_1.$$ 

On a general fiber $g_2$ is an isomorphism. Hence all the exceptional divisors of the morphism $g_2$ are contained in a finite number of fibers of $\widetilde{\pi}$. Since the linear system~$\widetilde{\HHH}$ is base-point free, the pair $(\widetilde{X}, \widetilde{\HHH 
})$ has the same singularities as $\widetilde{X}$ itself. In particular, $(\widetilde{X}, \widetilde{\HHH 
})$ is canonical. Write
$$ 
K_{\widetilde{X}}+\widetilde {\HHH}+\sum a_i \widetilde{E}_i=g^*(K_X+\HHH) \simC 0,\qquad (K_X+\HHH) \simC 0, $$
where $\widetilde{E}_i$ are the exceptional divisors of the morphism $g$. We have $a_i \in \mathbb{Z}$ since $K_X+\HHH$ is a Cartier divisor on $X$. By construction all the exceptional divisors $\widetilde{E}_i$ are contained in a finite number of fibers of the morphism $\widetilde{\pi} \colon \widetilde{X} \longrightarrow C$ except $\widetilde{E}_1$ for which we have $g_* \widetilde{E}_1=\Gamma$.

We run the $G$-MMP over $C$ for the pair ${(
\widetilde{X}, (1 - \epsilon) \widetilde{\HHH})}$ for $0 < \epsilon \ll 1$. 

Let $\widetilde{S}$ be a general fiber of the morphism $\widetilde{\pi}$. It is easy to see that  the linear system $$ - (K_{\widetilde {X}}+(1 - \epsilon) \widetilde{\HHH} )|_{\widetilde{S}} = - \epsilon K_{\widetilde{S}}$$ is nef on $\widetilde{S}$. Hence the result of applying the $G$-MMP is a $G\mathbb{Q}$-Fano\textendash Mori fibration with the base $B$ over $C$. We have the following commutative diagram
\[
\xymatrix{
\widetilde{X} \ar@{-->}[r]^{f}\ar[d]_{\widetilde{\pi}} & \bar{X}\ar[d]^{\bar {\pi}}\ar[dl]^{}
\\
C & B \ar[l]^{\psi}
} 
\]
where $f$ is a composition of divisorial contractions and $K_{\widetilde {X}}+(1 - 
\epsilon) \widetilde{\HHH}$-flips. By Proposition \ref{claim-19} we see that $\bar{X}$ is a $G\mathbb{Q}$-del Pezzo fibration of degree $1$ over $B$ (and $B = C$) and $\bar{X}_\eta \simeq X_\eta$.

The pair $(\widetilde{X}, (1 - \epsilon) \widetilde{\HHH})$ is terminal, therefore the pair $(\bar{X}, (1 - \epsilon) \bar{\HHH})$ is terminal as well. In particular, $\bar{X}$ has terminal singularities. Since $1$ is not an accumulation point of the set of $3$-dimensional canonical thresholds (see \cite{Prokhorov-2008}) we can choose $\epsilon \ll 1$ such that the pair $(\bar{X}, \bar{\HHH})$ is canonical.

Since $f^{-1}$ does not contract divisors the following formula holds:
\begin{equation}
\label{eq:f1} 
K_{\bar{X}}+\bar{\HHH}+\sum a_i \bar{E}_i \simC 0,
\end{equation}
where $f_{*} \widetilde{E}_i = \bar{E}_i$. Thus, 

\begin{equation}
\label{eq:f2} 
\bar{\HHH}+\sum a_i \bar{E}_i \sim - K_{\bar{X}}+\bar{\pi}^*A
\end{equation}
for a $G$-invariant divisor $A$ on $C$. By construction $\bar{E_i}$ lie in the fibers of $\bar{\pi}$. Since~$\rho^G (\bar{X} / C) = 1$ any fiber of $\widetilde{\pi}$ is $G$-irreducible. Since $\bar{E}_i$ are $G$-irreducible we get that $m_i \bar{E}_i=\bar {\pi}^*(x_i)$ where $x_i \in C$ for some integer $m_i$. The case $m_i > 1$ corresponds to a multiple fiber of the morphism~$\bar{\pi}$. Adding if necessary some number of points $x_i$ to $A$ we may assume that in the equation \eqref{eq:f2} we have $0 \leq a_i < m_i$ for any $a_i$. Suppose that there exists such $i$ that~$a_i > 0$. 

By construction we have $f_* \widetilde{E}_1 = \bar{\Gamma}$ where $\bar{\Gamma}$ is a curve that is a section of $\bar{\pi}$ and $\bar{\Gamma} \subset \mathrm{Bs}\ \bar{\HHH}$. By Lemma \ref{lem-5} in a neighbourhood of any base-point of $\bar{\HHH}$ we have $ - K_{\bar{X}} \sim \bar{\HHH}$. If $\bar{\pi}^* ( x_i ) = m_i \bar{E}_i$ then  
$m_i \bar{E}_i \cdot \bar{\Gamma}=1$ and $\bar{E}_i \cdot \bar{\Gamma}=
\frac{1}{m_i}$. Let us consider the base-point $P \in \bar{E}_i \cap \bar{\Gamma}$. From the formula \eqref{eq:f1} it follows that $a_i \bar{E}_i$ is a Cartier divisor in a neighbourhood of $P$. Thus $a_i 
\bar{E}_i \cdot \bar{\Gamma} \in \mathbb{Z}$. On the other hand, \[
0 < a_i \bar{E}_i 
\cdot \bar{\Gamma}=\frac{a_i}{m_i} < 1 
\]
since $0 < a_i < m_i$. The contradiction shows that \[ 
K_{\bar{X}}+\bar{\HHH} \simC 0. 
\] 
This completes the proof.
\end{proof}

\section{Gorenstein model}
\label{section-4}
We need the following technical proposition that follows easily from Kodaira's Lemma (see e. g. \cite[0.3.5]{KMM-1987}).

\begin{proposition}
\label{claim-8}
Let $B$ be a nef big $\mathbb{Q}$-Cartier divisor on a normal projective variety~$X$ with at worst terminal singularities. Then there exists an ample $\mathbb{Q}$-Cartier divisor~$A$ and an effective $\mathbb{Q}$-Cartier divisor $N$ such that $B\simQ A+N$ and the pair~$(X, N)$ is terminal.
\end{proposition}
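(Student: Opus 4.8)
The plan is to first invoke Kodaira's Lemma to produce a rough decomposition of $B$, and then to rescale the effective part so that terminality of the resulting pair is retained by an openness argument.

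First, since $B$ is big and $\mathbb{Q}$-Cartier, Kodaira's Lemma yields an ample $\mathbb{Q}$-Cartier divisor $A_0$ and an effective divisor $N_0$ with $B \simQ A_0 + N_0$; as $N_0 \simQ B - A_0$ is a difference of $\mathbb{Q}$-Cartier divisors, it is itself $\mathbb{Q}$-Cartier. For a parameter $s \in (0,1]$ I would set $N := s N_0$ and $A := B - s N_0$. Then $A \simQ (1-s) B + s A_0$, and since $(1-s) B$ is nef (because $B$ is nef and $1 - s \geq 0$) while $s A_0$ is ample, the sum $A$ is ample; both $A$ and $N$ are $\mathbb{Q}$-Cartier and $B \simQ A + N$ for every such $s$. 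Note that bigness of $B$ enters only through Kodaira's Lemma, whereas nefness is exactly what guarantees ampleness of $A$.

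It then remains to choose $s$ so that the pair $(X, N) = (X, s N_0)$ is terminal. Here I would fix a log resolution $f \colon Y \to X$ of the pair $(X, N_0)$ and write $K_Y + s \widetilde{N}_0 = f^{*}(K_X + s N_0) + \sum_i b_i(s) E_i$, where $\widetilde{N}_0$ is the strict transform of $N_0$ and the $E_i$ are the $f$-exceptional divisors. Each discrepancy $b_i(s)$ is an affine-linear function of $s$, and at $s = 0$ one has $b_i(0) > 0$ because $X$ has terminal singularities; likewise the coefficients of $s N_0$ tend to $0$ as $s \to 0$, so $\lfloor s N_0 \rfloor = 0$ for small $s$. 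Hence for $0 < s \ll 1$ all discrepancies of $(X, s N_0)$ are strictly positive and the pair is terminal, which completes the construction.

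The only point genuinely requiring care is this last step: terminality is a property of the \emph{pair}, not merely of the variety $X$, so one must verify on a resolution that shrinking the coefficient of the effective part keeps every discrepancy above the terminal threshold. This is precisely the openness of the terminal condition, and the affine-linear dependence of the $b_i$ on $s$ reduces the choice of a sufficiently small $s$ to a routine finiteness argument once a single log resolution has been fixed.
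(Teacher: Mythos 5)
Your argument is correct and is precisely the standard deduction from Kodaira's Lemma that the paper intends: the paper itself gives no proof of this proposition, only the remark that it ``follows easily from Kodaira's Lemma'' with a citation to \cite[0.3.5]{KMM-1987}. Your decomposition $B\simQ\bigl((1-s)B+sA_0\bigr)+sN_0$ for $0<s\ll 1$, with ampleness of the first summand coming from nef plus ample and terminality of $(X,sN_0)$ coming from openness of the terminal condition on a fixed log resolution, fills in exactly the omitted routine argument.
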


We also use the construction of a terminal modification of a pair.

\begin{proposition}[{\cite[2.8]{Corti-1995}}]
\label{claim-9}
Let $X$ be a normal projective $G$-variety with at worst terminal singularities and let the pair $(X, \HHH)$ be canonical. Then there exists a $G$-equivariant terminal modification, that is a $G$-variety $\bar{X}$ and a birational $G$-morphism $f \colon \bar{X} \longrightarrow X$ such that the pair $(\bar{X}, \bar{\HHH})$ is terminal where $\bar {\HHH} = f^{-1}_* \HHH$ and the following formula holds
$$
K_{\bar{X}}+\bar {\HHH}=f^*(K_X+\HHH).
$$ 
\end{proposition}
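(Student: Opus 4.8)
The plan is to realize $\bar{X}$ as the output of a relative $G$-MMP over $X$ begun from a log resolution, and to let the negativity lemma do the essential work of forcing crepancy. First I would choose a $G$-equivariant log resolution $h\colon W\longrightarrow X$ of the pair $(X,\HHH)$ (which exists by \cite{AW-1997}), arranged so that the strict transform $\HHH_W=h^{-1}_*\HHH$ is base-point free and $\HHH_W+\operatorname{Exc}(h)$ has simple normal crossings. Writing
\[
K_W+\HHH_W=h^*(K_X+\HHH)+\sum_i a_i E_i,
\]
the canonicity of $(X,\HHH)$ gives $a_i\ge 0$ for every $h$-exceptional divisor $E_i$. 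I would split the exceptional divisors into the \emph{crepant} ones ($a_i=0$) and the \emph{discrepant} ones ($a_i>0$), set $F=\sum_{a_i>0}a_i E_i\ge 0$, so that $K_W+\HHH_W$ is $\mathbb{Q}$-linearly equivalent to $F$ over $X$, and invoke the standard fact that a canonical pair has only finitely many divisors of discrepancy zero, all of which already appear on the single log resolution $W$. This finiteness is what will eventually guarantee terminality.

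Next I would run the $G$-equivariant MMP for the pair $(W,\HHH_W)$ over $X$, in the sense of movable boundaries of \cite{Corti-1995} and \cite{Alexeev-1994ge}. As we are in dimension three this terminates (see \cite{Mori-1988}, \cite{KM-1998}), producing a birational $G$-morphism $f\colon\bar{X}\longrightarrow X$ with $\bar{\HHH}=f^{-1}_*\HHH$ and $K_{\bar{X}}+\bar{\HHH}$ being $f$-nef. The crucial observation is that any $(K_W+\HHH_W)$-negative extremal ray $R$ over $X$ satisfies $F\cdot R<0$, hence is contained in $\operatorname{Supp}F$, i.e. in the union of the discrepant divisors. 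Consequently no crepant divisor is ever contracted: a divisorial contraction would contract a divisor swept out by curves lying in $\operatorname{Supp}F$, which therefore must itself be one of the $E_i$ with $a_i>0$, while a flip only alters curves inside $\operatorname{Supp}F$. Thus all crepant divisors persist as genuine divisors on $\bar{X}$, and the containment $R\subset\operatorname{Supp}F$ is preserved at every step since discrepancies over $X$ are intrinsic.

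Then I would apply the negativity lemma to the $f$-exceptional $\mathbb{Q}$-divisor $B:=K_{\bar{X}}+\bar{\HHH}-f^*(K_X+\HHH)=\sum_{i\ \text{survives}}a_i\bar{E}_i$. It is effective since the $a_i\ge 0$; on the other hand $f^*(K_X+\HHH)$ is numerically trivial over $X$, so $B$ is $f$-nef, and the negativity lemma says an $f$-nef $f$-exceptional divisor is anti-effective, whence $B\le 0$. Therefore $B=0$, which is exactly the required crepancy $K_{\bar{X}}+\bar{\HHH}=f^*(K_X+\HHH)$, and in particular no discrepant divisor survives — all of them are contracted. For terminality I note that each divisor over $\bar{X}$ has the same discrepancy with respect to $(\bar{X},\bar{\HHH})$ as with respect to $(X,\HHH)$ by crepancy; since every discrepancy-zero divisor over $X$ already appeared on $W$ and was not contracted, it is an honest divisor on $\bar{X}$, so every remaining $f$-exceptional divisor has strictly positive discrepancy and $(\bar{X},\bar{\HHH})$ is terminal. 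That $\bar{X}$ itself is terminal follows because $\bar{\HHH}$ is effective, so the discrepancies of $(\bar{X},0)$ dominate those of $(\bar{X},\bar{\HHH})$.

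The main obstacle is the discrepancy bookkeeping that makes the MMP extract \emph{exactly} the crepant divisors: I need both that all discrepancy-zero divisors are captured on the one resolution $W$ (finiteness of crepant places of a canonical pair) and that the relative MMP never contracts them (the containment of every negative ray in $\operatorname{Supp}F$). Once these are in place, the negativity lemma upgrades ``$f$-nef and effective exceptional'' to ``crepant'' with essentially no extra effort. The remaining ingredients — existence of a $G$-equivariant log resolution, termination of the three-dimensional $G$-MMP, and the good behaviour of the movable system $\HHH_W$ along the program — are standard and citeable.
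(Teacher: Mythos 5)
The paper gives no proof of this proposition at all: it is quoted directly from \cite[2.8]{Corti-1995} (with the $G$-equivariant decoration added), so there is nothing internal to compare against. Your argument is correct and is essentially the standard proof from the cited source: log resolution, relative $(K+\HHH)$-MMP over $X$, the observation that every negative extremal ray lies in the support of the discrepant part $F$ so only discrepant divisors are contracted, and the negativity lemma to force $\sum a_i\bar{E}_i=0$ and hence crepancy and terminality. The only point worth making explicit is why every crepant divisor of $(X,\HHH)$ already appears on $W$: since $\HHH_W$ is base-point free on the smooth $W$, the pair $(W,\HHH_W)$ is terminal, and for any divisor $E$ exceptional over $W$ one has $a(E;X,\HHH)=a\bigl(E;W,\HHH_W-\sum a_iE_i\bigr)\ge a(E;W,\HHH_W)>0$; with that one line supplied, your proof is complete.
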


Now we construct a Gorenstein model. 

\begin{thm}
\label{theorem-10}
Let $\pi \colon X \longrightarrow C$ be a $G\mathbb{Q}$-del Pezzo fibration of degree $1$. Then there exists a Gorenstein model, that is a generalised $G\mathbb{Q}$-del Pezzo fibration $\sigma \colon Y 
\longrightarrow C$ of degree $1$ such that $Y$ is $G$-birational to $X$ over $C$ and $Y$ has at worst  canonical Gorenstein singularities. Moreover, the generic fiber $Y_\eta$ of $\sigma$ is non-singular, $Y_\eta \simeq X_\eta$, and the special fibers are reduced and irreducible. 
\end{thm}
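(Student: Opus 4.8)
The plan is to start from the canonical model produced by Theorem~\ref{theorem-7} and improve it to a Gorenstein model by a careful application of the $G$-MMP, using the linear system $\bar{\HHH}$ as a tie-breaker. By Theorem~\ref{theorem-7} we have a $G\mathbb{Q}$-del Pezzo fibration $\bar{\pi}\colon \bar{X}\longrightarrow C$ of degree~$1$, $G$-birational to $X$ over $C$, for which the pair $(\bar{X},\bar{\HHH})$ is canonical and $K_{\bar{X}}+\bar{\HHH}\simC 0$, where $\bar{\HHH}=|-K_{\bar{X}}+\bar{\pi}^*\bar{D}|$. The issue is that $\bar{X}$ is only $\mathbb{Q}$-Gorenstein with terminal singularities; what we want is an honest Gorenstein variety $Y$, i.e. one on which $K_Y$ is Cartier. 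The idea is to extract the divisors that obstruct Gorensteinness while keeping the pair canonical, then contract everything back down so that $K_Y+\bar{\HHH}_Y$ becomes linearly (not merely numerically over $C$) equivalent to a pullback.

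First I would take a $G$-equivariant terminal modification $f\colon \hat{X}\longrightarrow \bar{X}$ of the pair $(\bar{X},\bar{\HHH})$ as provided by Proposition~\ref{claim-9}, so that $K_{\hat{X}}+\hat{\HHH}=f^*(K_{\bar{X}}+\bar{\HHH})$ with $(\hat{X},\hat{\HHH})$ terminal and $\hat{\HHH}=f^{-1}_*\bar{\HHH}$. Since $K_{\bar{X}}+\bar{\HHH}\simC 0$, on $\hat X$ we get $K_{\hat X}+\hat\HHH\simC 0$ as well, so $\hat\HHH\simQC -K_{\hat X}$. Next I would run the $G$-MMP over $C$ for the pair $(\hat X,(1-\epsilon)\hat\HHH)$ with $0<\epsilon\ll 1$, exactly as in the proof of Theorem~\ref{theorem-7}: on a general fibre the relevant divisor restricts to $-\epsilon K$ times an ample class, so the output is again a $G\mathbb{Q}$-Fano–Mori fibration over $C$, and Proposition~\ref{claim-19} forces the base to be $C$ and the generic fibre to stay $X_\eta$. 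Call the result $\sigma\colon Y\longrightarrow C$; it is a $G\mathbb{Q}$-del Pezzo fibration of degree~$1$ and the transform of the pair stays canonical. The point of passing through the terminal modification is to arrange that the discrepancy coefficients $a_i$ in $K_Y+\HHH_Y+\sum a_i E_i\simC 0$ all vanish, so that on $Y$ one has $K_Y+\HHH_Y\simC 0$ with $\HHH_Y$ an honest anticanonical-type system; this is what upgrades $K_Y$ to a Cartier divisor and makes $Y$ Gorenstein.

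The place where I expect the real work to be is showing that the multiple-fibre/discrepancy bookkeeping actually closes up to give $K_Y$ Cartier rather than merely $\mathbb{Q}$-Cartier. The numerical argument of Theorem~\ref{theorem-7} shows $K_{\bar X}+\bar\HHH\simC 0$, but Gorensteinness of $Y$ requires controlling the local structure of $K_Y$ at the terminal points, in particular ruling out multiple fibres with a nontrivial index. I would use the base-point analysis via Lemma~\ref{lem-5} and Lemma~\ref{lem-6}: in a neighbourhood of the section $\bar\Gamma\subset\mathrm{Bs}\,\HHH_Y$ we have $\HHH_Y\sim -K_Y$, and the intersection computation $a_iE_i\cdot\bar\Gamma\in\mathbb Z$ together with $0\le a_i<m_i$ again forces the offending coefficients to vanish, exactly as in Theorem~\ref{theorem-7}. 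Once $K_Y+\HHH_Y\simC 0$ holds with $\HHH_Y$ base-point free off the section, $-K_Y\simC\HHH_Y$ is Cartier along the generic fibre, and by Lemma~\ref{lem-14} (applied once we know $K_Y$ is Cartier) every geometric fibre is reduced and irreducible, giving condition (v) of Theorem~\ref{thma-A}.

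Finally, to conclude, I would verify that $Y$ satisfies all the requirements of a generalised $G\mathbb{Q}$-del Pezzo fibration of degree~$1$ with Gorenstein canonical singularities: conditions~\ref{defin-1b} and~\ref{defin-1c} of Definition~\ref{defin-1} are inherited from the MMP output, canonicity of $(Y,\HHH_Y)$ gives canonical singularities of $Y$ after choosing $\epsilon$ small (using, as in Theorem~\ref{theorem-7}, that $1$ is not an accumulation point of $3$-dimensional canonical thresholds, \cite{Prokhorov-2008}), and the vanishing of all $a_i$ promotes $K_Y$ from $\mathbb{Q}$-Cartier to Cartier, i.e. $Y$ is Gorenstein. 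The generic fibre is preserved throughout by Proposition~\ref{claim-19}, so $Y_\eta\simeq X_\eta$, and $Y$ is $G$-birational to $X$ over $C$ by composing the birational maps at each stage. The main obstacle, to reiterate, is the index-one (Cartier) claim for $K_Y$: the terminal modification is what makes the discrepancy coefficients integral, and the intersection-with-the-section argument is what forces them to be zero.
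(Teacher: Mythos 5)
There is a genuine gap at the crucial step. Your plan runs the $G$-MMP for $(\hat X,(1-\epsilon)\hat\HHH)$ over $C$ and concludes via Proposition~\ref{claim-19} with a terminal, $G\mathbb{Q}$-factorial fibration with $\rho^G(Y/C)=1$ on which $K_Y+\HHH_Y\simC 0$. But this is exactly the output of Theorem~\ref{theorem-7} again: the relation $K_{\bar X}+\bar\HHH\simC 0$, i.e.\ the vanishing of all the coefficients $a_i$, is already established there, and the resulting $\bar X$ is in general \emph{not} Gorenstein. The linear equivalence $-K_Y\simC H_Y$ for a member $H_Y\in\HHH_Y$ only tells you $-K_Y$ is linearly equivalent over $C$ to an effective Weil divisor; it says nothing about $K_Y$ being Cartier at the (isolated, possibly higher-index) terminal points. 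Your sentence that this relation ``is what upgrades $K_Y$ to a Cartier divisor'' is the unproved claim, and the intersection-with-the-section argument you invoke proves integrality of the $a_i$'s contribution, not index one of $K_Y$. Relatedly, the appeal to Lemma~\ref{lem-14} is circular, since it presupposes $K_Y$ Cartier.

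The missing idea in the paper's proof is a \emph{contraction}, not another MMP run preserving $\rho^G=1$. Starting from the terminal modification $(\bar X,\bar\HHH)$ of the canonical model, one observes that $\bar\HHH$ is nef and big, writes $\bar\HHH\simQ A+N$ by Kodaira's Lemma (Proposition~\ref{claim-8}) so that $K_{\bar X}+N\simQC -A$, and applies the Contraction Theorem to the face of curves with $Z\cdot\bar\HHH=0$. This yields a birational crepant morphism $g\colon\bar X\to Y$ over $C$ together with a $\sigma$-ample \emph{Cartier} divisor $H$ on $Y$ with $g^*H=\bar\HHH$; then $K_Y+H=\sigma^*D$ forces $K_Y$ to be Cartier, and $g^*K_Y=K_{\bar X}$ gives canonical singularities. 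Note that one must give up terminality and $\rho^G(Y/C)=1$: the Gorenstein model is only a \emph{generalised} $G\mathbb{Q}$-del Pezzo fibration with canonical singularities, which is precisely the trade-off your approach does not make room for.
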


\begin{proof}
By Theorem \ref{theorem-7} we may assume that the pair $(X, \HHH)$ is canonical where $\HHH = | -K_X + \pi^*D|$ for some $G$-invariant ample Cartier divisor $D$ on $C$. Let $(\bar{X}, \bar{\HHH})$ be its $G$-equivariant terminal modification over $C$ (see Proposition \ref{claim-9}). By Lemma \ref{lem-5} the linear system $\bar{\HHH}$ has at worst isolated non-singular base-points $P_i$ such that $\mathrm{mult}_{P_i} \bar{\HHH} = 1$. Since $\bar{X}$ is terminal it has at worst isolated singular points. It follows that a general element of $\bar{\HHH}$ is a Cartier divisor.
 
The linear system $\bar{\HHH}$ is nef since it has no curves in its base locus. Restricting $\bar{\HHH}$ to a general fiber of the morphism ${\bar{\pi} \colon \bar{X} \longrightarrow C}$ we notice that for any $m\geq2$ the image of the map given by the linear system $| m \bar{\HHH} |$ is three-dimensional. Hence the linear system $ \bar{\HHH}$ is big. By Proposition \ref{claim-8} for some $\mathbb{Q}$-Cartier divisors $A$ and $N$ we have $$\bar{\HHH}\simQ A+N,$$ where $A$ is ample, $N$ is effective and the pair $(\bar{X}, N)$ is terminal. By the construction of $\bar{X}$ we have $K_{\bar{X}}+\bar{\HHH} \simC 0$, hence $$K_{\bar{X}}+N \simQC - A.$$

Thus, for any curve $Z \in \overline{\operatorname{NE}}(X / C)$ with $Z\cdot \bar{\HHH}=
0$ we have $Z \cdot (K_{\bar{X}}+N) < 0$. By Contraction Theorem \cite[Theorem~3.2.1]{KMM-1987} the linear system $|n \bar{\HHH}|$ gives a ({$G$-equivariant}) contraction morphism $g \colon \bar{X} \longrightarrow Y$ with connected fibers such that the variety $Y$ is projective over $C$ and the following diagram is commutative
\[
\xymatrix{
\bar X \ar[rr]^{g} \ar[dr]^{\bar{\pi}}  & &  Y \ar[dl]_{\sigma}
\\
& C &
} 
\]

It also guarantees the existence of a $\sigma$-ample Cartier divisor $H$ on $Y$ such that $g^*{H}=\bar{\HHH}$. Since $\bar{\HHH}$ is ample on an open subset of $\bar{X}$ we see that $g(\bar{X})=Y$ is three-dimensional.  Hence $g$ is birational. 

Now we prove that $Y$ has Gorenstein canonical singularities. Since by construction of $\bar{X}$ we have $K_{\bar{X}}+\bar{\HHH}=\bar{\pi}^*D$ for some ample divisor $D$ on $C$ and $ g_* (K_{\bar{X}}+\bar{\HHH})=K_Y+H$ then from the commutativity of the diagram $$K_Y+H=g_* \bar{\pi}^*D=\sigma^*D.$$ Thus $K_Y$ is a Cartier divisor. We have $$g^*K_Y=g^*(-H+\sigma^*D)=-\bar{\HHH}+
\bar{\pi}^*D=K_{\bar{X}}.$$ Hence $Y$ has canonical singularities. 

The fact that $Y_\eta$ is non-singular and $Y_\eta \simeq X_\eta$ follows from Proposition \ref{claim-19}. The special fibers are reduced and irreducible due to Lemma \ref{lem-14}. It remains to show that $Y$ is $G\mathbb{Q}$-factorial. We show this in the next lemma.
 
\begin{lem}
Let $\sigma \colon Y \longrightarrow C$ be a generalised Gorenstein del Pezzo fibration of arbitrary degree with at worst canonical singularities. Suppose that the fibers of $\sigma$ are reduced and irreducible. Then $Y$ is $\mathbb{Q}$-factorial.
\end{lem}
\begin{proof}
Since $Y$ is canonical it has finitely many non-$\mathbb{Q}$-factorial points by \cite[3.4]{Reid-1985}. Let $F = \sum F_i$ be the union of the fibers of $\sigma$ that contain all the points that are not $\mathbb{Q}$-factorial. Write the excision exact sequence
$$  \bigoplus_i \mathbb{Z} F_i \longrightarrow \mathrm{Cl} \ Y \longrightarrow \mathrm{Cl} \ U \longrightarrow 0 $$
where $U = Y \setminus F$. Since $U$ is $\mathbb{Q}$-factorial we have $\mathrm{Cl} \ U \otimes \mathbb{Q} = \mathrm{Pic} \ U \otimes \mathbb{Q}$. We see that any Weil divisor on $Y$ after subtracting some number of fibers $F_i$ and taking multiple comes from $\mathrm{Pic} \ U$. So it is enough to prove that any Cartier divisor on $U$ extends to a Cartier divisor on $Y$. But this follows from the commutative diagram with exact rows (see \cite[21.4.3]{EGA-1967})
\[
\xymatrix{
0 \ar[r] & \mathrm{Pic} \ C \ar[r]^{\sigma^*}  \ar[d]  & \mathrm{Pic} \ Y \ar[r] \ar[d]  & \mathrm{Pic} \ Y_\eta \ar[d]^{\simeq} \ar[r] & 0
\\
0 \ar[r] & \mathrm{Pic} \ V \ar[r]^{\sigma|_V^*} & \mathrm{Pic} \ U \ar[r] & \mathrm{Pic} \ Y_\eta  \ar[r] & 0
} 
\]
where $V = \sigma (U)$. Indeed, the left vertical arrow is clearly surjective, hence by the~Snake Lemma the middle vertical arrow is surjective as well. 
\end{proof}

The theorem is proven.
\end{proof}

\begin{remark}
\label{remark-11}
In the proof of the theorem by construction we have $$Y \simeq \operatorname{Proj}_C \bigoplus_{m\geq0} \sigma_* \OOO_Y (m H).$$
\end{remark}

\

\begin{proof}[Proof of Theorem \ref{thma-A}]
Follows immediately from Propositions \ref{claim-4}, \ref{claim-3},  and Theorems \ref{theorem-7}, \ref{theorem-10}.
\end{proof}

\section{Anticanonical algebra of a degree $1$ del Pezzo surface}
\label{section-5}

We need the following results on the anticanonical algebra $$R=\bigoplus_{m\geq0} \mathrm{H}^0 (S, -mK_S)$$ of a degree $1$ del Pezzo surface $S$.

\begin{proposition}
\label{claim-12}
The following holds: 
\begin{enumerate}
\item \label{claim-12a}
$\dim \mathrm{H}^0 (S, -mK_S)=m(m+1)/2+1;$
\item \label{claim-12b}
the linear system $|-K_S|$ has one simple base-point;
\item \label{claim-12c}
the linear system $|-2K_S|$ is generated by global sections;
\item \label{claim-12d}
the linear system $|-3K_S|$ is very ample.
\end{enumerate}
\end{proposition}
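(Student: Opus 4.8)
The plan is to deduce everything from Riemann--Roch together with a vanishing theorem, and then to analyse the anticanonical pencil by restriction. I assume throughout that $S$ has the standard cohomological properties of a degree~$1$ del Pezzo surface, namely that $S$ is rational with $\chi(\OOO_S)=1$ (this holds for $S$ smooth and, via Kawamata--Viehweg vanishing, for $S$ with at worst Du~Val singularities); write $A=-K_S$, so that $A^2=(-K_S)^2=1$.

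For part \ref{claim-12a} I would first record the vanishing $H^i(S,-mK_S)=0$ for all $i>0$ and all $m\geq 0$. This follows from Kodaira (respectively Kawamata--Viehweg) vanishing applied to $-mK_S\sim K_S+\bigl(-(m+1)K_S\bigr)$, in which $-(m+1)K_S$ is ample for every $m\geq 0$. Hence $h^0(S,-mK_S)=\chi(S,-mK_S)$, and Riemann--Roch on the surface gives
\[
\chi(S,-mK_S)=\chi(\OOO_S)+\tfrac12(-mK_S)\cdot(-mK_S-K_S)=1+\tfrac12\,m(m+1)\,(-K_S)^2=1+\tfrac{m(m+1)}{2},
\]
which is \ref{claim-12a}. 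For part \ref{claim-12b}, \ref{claim-12a} gives $h^0(S,-K_S)=2$, so $|-K_S|$ is a pencil. Since $A^2=1$, two general members meet with total intersection number $1$; by Bertini the general member is integral and smooth away from the base locus, and the value $1$ forces the scheme-theoretic base locus to be a single reduced point $P$ at which the general member is smooth. Thus $|-K_S|$ has exactly one simple base point, proving \ref{claim-12b}.

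Parts \ref{claim-12c} and \ref{claim-12d} I would obtain simultaneously by restricting to a general member $C\in|-K_S|$. By adjunction $K_C=(K_S+C)|_C=0$, so $C$ has arithmetic genus $1$, and $\deg\bigl((-mK_S)|_C\bigr)=(-mK_S)\cdot(-K_S)=m$. From the exact sequence
\[
0\longrightarrow \OOO_S\bigl(-(m-1)K_S\bigr)\longrightarrow \OOO_S(-mK_S)\longrightarrow \OOO_C\bigl((-mK_S)|_C\bigr)\longrightarrow 0
\]
and the vanishing $H^1\bigl(S,-(m-1)K_S\bigr)=0$ (valid for $m\geq 1$), the restriction $H^0(S,-mK_S)\to H^0\bigl(C,(-mK_S)|_C\bigr)$ is surjective. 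On a genus~$1$ curve a line bundle of degree $2$ is base point free and one of degree $3$ is very ample. Taking $m=2$ and letting $C$ range over the pencil (every point of $S$, including $P$, lies on some member) yields \ref{claim-12c}; taking $m=3$ shows that $|-3K_S|$ separates points and tangent directions lying on a common anticanonical member, which is the bulk of \ref{claim-12d}. In the smooth case one may instead invoke Reider's theorem directly: for $D=-3K_S$ one has $D^2=9\geq 5$ and $D\cdot E=-3K_S\cdot E$ is never $0$ or $\equiv 1$ for effective $E$, giving \ref{claim-12c}, and for $D=-4K_S$ one has $D^2=16\geq 10$ with $D\cdot E\geq 4$, giving \ref{claim-12d}.

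The step I expect to be the main obstacle is the separation \emph{transverse} to the pencil needed for \ref{claim-12d} (the restriction argument only sees data along $C$, and Reider is unavailable once $S$ is singular). Two points on distinct members are already separated by the pencil $|-K_S|$, so the associated sections of $-K_S$, cubed, separate them inside $|-3K_S|$. To separate the tangent direction at a point $Q$ transverse to the unique member $C_0\ni Q$, I would choose $s_2\in H^0(S,-2K_S)$ with $s_2(Q)\neq 0$, which exists by \ref{claim-12c}: if $s_{C_0}$ cuts out $C_0$, then $s_{C_0}s_2\in H^0(S,-3K_S)$ vanishes at $Q$ to first order transversally to $C_0$. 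The base point $P$ and the finitely many singular anticanonical members require care, but are handled by choosing the restriction curve appropriately and again invoking \ref{claim-12c}; assembling these cases completes \ref{claim-12d}.
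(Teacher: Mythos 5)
Your argument is the classical one for smooth (or Du Val) del Pezzo surfaces of degree $1$, and within that scope it is essentially sound. The difficulty is that it does not prove the proposition as stated. In this paper a del Pezzo surface (Definition \ref{defin-delpezzo}) is only required to be Gorenstein with $-K_S$ ample and is explicitly allowed to be non-normal; moreover the proposition is applied in \S\ref{section-6} to \emph{every} closed fiber of the Gorenstein model $\sigma\colon Y\to C$, and those special fibers are in general neither smooth nor normal. Your standing hypotheses --- $S$ rational, $\chi(\OOO_S)=1$, Kodaira/Kawamata--Viehweg vanishing for $-mK_S$ --- are justified in your text only for $S$ with at worst Du Val singularities. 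They are not available in the remaining cases: a normal Gorenstein del Pezzo surface of degree $1$ may be a cone over an elliptic curve (a non-rational surface with a simple elliptic, hence non-canonical, singularity), so the appeal to Kawamata--Viehweg does not apply there; and for non-normal $S$ none of your cohomological inputs is established --- Kodaira-type vanishing is not automatic off the normal locus, and $\chi(\OOO_S)=1$ would itself require proof. The restriction-to-an-anticanonical-member argument for \ref{claim-12c} and \ref{claim-12d} likewise presupposes the structure of $|-K_S|$ (an integral pencil of arithmetic genus $1$ curves with one simple base point) that you only derive under those same hypotheses.

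The paper's proof is organized around exactly this dichotomy. For normal $S$ it cites Hidaka--Watanabe, which covers both the Du Val case and the elliptic-cone case; for non-normal $S$ it uses Reid's classification --- the normalization is $T\simeq\mathbb{P}^2$ with $\alpha^*(-K_S)\simeq\OOO_{\mathbb{P}^2}(1)$, glued along a conic --- together with the Abe--Furushima formula $\dim\mathrm{H}^0(T,\alpha^*(-mK_S))=\dim\mathrm{H}^0(S,-mK_S)+m$ to obtain \ref{claim-12a}, and the corresponding statements of Reid and Abe--Furushima for \ref{claim-12b} and \ref{claim-12d}, with \ref{claim-12c} then deduced by a restriction argument similar in spirit to yours. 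To repair your proof you would need either to restrict the proposition to the smooth/Du Val case (which would break its use in \S\ref{section-6}) or to supply separate arguments or citations for the elliptic-cone and non-normal cases.
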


\begin{proof}
Let $S$ be normal. In this case, the statements \ref{claim-12a}--\ref{claim-12d} are well known if $S$ is non-singular. In the singular case see \cite[\S 4]{HW-1981}.

Let $S$ be non-normal and let $\alpha \colon T \longrightarrow S$ be its normalisation. According to \cite[1.1]{Reid-1994} (see also \cite[1.5]{AF-2003}) we have $$T \simeq \mathbb{P}^2, \quad \alpha^*(-K_S) \simeq 
\OOO_{\mathbb{P}^2} (1).$$

It is proven there that $\alpha$ is an isomorphism outside a (possibly singular) conic $Q$ on $\mathbb{P}^2$, and the morphism $\alpha|_Q$ is a $2$ to $1$ covering over a curve on $S$.

According to \cite[2.2]{AF-2003} for any $m\geq1$ the following holds:
$$
\dim \mathrm{H}^0 (T, \alpha^*(-mK_S))=\dim \mathrm{H}^0 (S, -mK_S)+
m.$$ 
Hence we get $$\dim \mathrm{H}^0 (S, -mK_S)=\dim 
\mathrm{H}^0 (\mathbb{P}^2, \OOO_{\mathbb{P}^2} (m)) - m=(m+1)(m+2)/2 - m=
m(m+1)/2+1.$$

This proves \ref{claim-12a}. The statements \ref{claim-12b} и \ref{claim-12d} are proven in \cite[1.2, 1.5(A)]{AF-2003}, see also \cite[4.10 (ii)]{Reid-1994}.

Since we have $| \sigma^*(- K_S) |=| 
\OOO_{\mathbb{P}^2}(1)|$, all the elements of the linear system $| -K_S |$ are irreducible. 
By the adjunction formula they are curves of arithmetic genus~$1$. Since they are rational these curves are either nodal or cuspidal cubic curves. For any $C \in |-3K_S|$ we have the following exact sequence 
$$ 0 \longrightarrow \OOO_S (-3K_S-C) \longrightarrow \OOO_S (-3K_S) \longrightarrow \OOO_C (-3K_S) \longrightarrow 0. $$
Since any linear system on the curve $C$ of degree $2$ or more does not have base-points and $\mathrm{H}^1(S, \OOO_S) = 0$, the statement \ref{claim-12c} follows.
\end{proof}

\begin{corr}
\label{cor-13}
\begin{enumerate}
\item\label{corr-13a} 
The linear system $|-2K_S|$ defines a two-fold covering of the quadratic cone $$\phi=\phi_{|-2K_S|}  \colon S \longrightarrow Q \subset \mathbb{P}^3.$$

\item\label{corr-13b} 
The algebra $R=\bigoplus_{m\geq0} \mathrm{H}^0 (S, -mK_S)$ is isomorphic to $ \mathbb{C} [x, y, z, w] / (f) $ where the degree of generators are $1, 1, 2, 3$ correspondingly and the relation $f$ has degree $6$. Hence, $S$ is isomorphic to the degree $6$ hypersurface in the weighted projective space $\mathbb{P} (1, 1, 2, 3)$.
\end{enumerate}
\end{corr}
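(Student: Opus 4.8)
\emph{Part (1).} By Proposition \ref{claim-12}\ref{claim-12a} we have $\dim\mathrm{H}^0(S,-K_S)=2$ and $\dim\mathrm{H}^0(S,-2K_S)=4$, and by Proposition \ref{claim-12}\ref{claim-12c} the system $|-2K_S|$ is base-point free, so it defines a morphism $\phi\colon S\longrightarrow\mathbb{P}^3$ which is finite onto its image because $-2K_S$ is ample. The plan is to choose a basis $x,y$ of $\mathrm{H}^0(S,-K_S)$; then $x^2,xy,y^2$ are linearly independent in the $4$-dimensional space $\mathrm{H}^0(S,-2K_S)$, which I complete to a basis by one further section $z$. In the coordinates of $\mathbb{P}^3$ dual to $(x^2,xy,y^2,z)$ the Veronese identity $x^2\cdot y^2=(xy)^2$ forces $\phi(S)$ into the quadric cone $Q=\{u_0u_2=u_1^2\}$. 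As $S$ is irreducible and $\phi$ is finite, $\phi(S)$ is an irreducible surface inside the irreducible surface $Q$, hence $\phi(S)=Q$. The projection formula then gives $\deg\phi\cdot\deg Q=(-2K_S)^2=4(-K_S)^2=4$, and since $\deg Q=2$ we get $\deg\phi=2$, proving (1).

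\emph{Part (2), setup.} The cone $Q$ is isomorphic to the weighted projective space $\mathbb{P}(1,1,2)$. Pulling back along $\phi$ realizes the homogeneous coordinate ring $\mathbb{C}[x^2,xy,y^2,z]$ of $Q$ inside $R$; since this ring already has Krull dimension $3$, the elements $x,y,z$ are algebraically independent and generate a weighted polynomial subring $A=\mathbb{C}[x,y,z]\subset R$ with the variables of degrees $1,1,2$. Counting weighted monomials gives $\dim A_3=6$, whereas $\dim R_3=7$ by Proposition \ref{claim-12}\ref{claim-12a}, so I fix a section $w\in R_3$ completing $A_3$ to a basis of $R_3$. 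The generators of $R$ will be $x,y$ (degree $1$), $z$ (degree $2$) and $w$ (degree $3$).

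\emph{Part (2), main step.} The idea is to read the whole ring off the double cover $\phi$. In characteristic $0$ the trace splits $\phi_*\OOO_S=\OOO_Q\oplus\mathcal{L}^{-1}$, and the double-cover formula gives $\OOO_S(K_S)=\phi^*(K_Q\otimes\mathcal{L})$ with a branch divisor in $|\mathcal{L}^{2}|$. Using $\OOO_S(-K_S)=\phi^*\OOO_Q(1)$ and $K_Q=\OOO_Q(-4)$ one solves $\mathcal{L}=\OOO_Q(3)$, so the branch locus is a sextic $\{f_6=0\}$ with $f_6\in A_6$. Taking global sections of $\OOO_Q(m)\otimes\phi_*\OOO_S$ and summing over $m$ then yields
\begin{equation*}
R=\bigoplus_{m\ge0}\mathrm{H}^0(S,-mK_S)=A\oplus w\cdot A,\qquad w^2=f_6\in A_6,
\end{equation*}
where $w$ generates the summand $\bigoplus_m\mathrm{H}^0(Q,\OOO_Q(m-3))$. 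After completing the square to absorb a possible $wA_3$-term (legitimate since $\operatorname{char}=0$), the algebra is presented as $R\cong\mathbb{C}[x,y,z,w]/(w^2-f_6)$, with generators in degrees $1,1,2,3$ and a single relation of degree $6$. Consequently $S=\operatorname{Proj}R$ is the sextic hypersurface $\{w^2=f_6\}\subset\mathbb{P}(1,1,2,3)$.

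\emph{Main obstacle.} The delicate point is the double-cover computation over the \emph{singular} cone $Q$: the sheaf $\OOO_Q(m)$ is not invertible at the vertex, so the splitting $\phi_*\OOO_S=\OOO_Q\oplus\OOO_Q(-3)$ and the projection formula must be handled with reflexive sheaves (and, when $S$ is non-normal, via the normalization $T\cong\mathbb{P}^2$ supplied by the proof of Proposition \ref{claim-12}). As a safeguard, the conclusion can be cross-checked — and the task reduced to proving surjectivity of $\mathbb{C}[x,y,z,w]\to R$ — by the Hilbert series identity
\begin{equation*}
\sum_{m\ge0}\Big(\tfrac{m(m+1)}{2}+1\Big)t^{m}=\frac{1-t+t^2}{(1-t)^3}=\frac{1-t^{6}}{(1-t)^2(1-t^2)(1-t^3)},
\end{equation*}
which forces generation in degrees $1,1,2,3$ together with exactly one relation in degree $6$.
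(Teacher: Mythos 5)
Your write-up is correct in outline, and it follows the same classical route that the paper's own ``proof'' merely points to: the paper disposes of Corollary \ref{cor-13} in one line (``follows from the previous proposition in full analogy to the non-singular case, see [Do12, 8.3]''), so you have in effect supplied the argument behind the citation. Part (1) is complete as written. For part (2), two points deserve attention. First, the Hilbert-series identity is only a consistency check and does not by itself ``force'' generation in degrees $1,1,2,3$ with a single relation in degree $6$ (an algebra with an extra generator in higher degree and a compensating extra relation has the same Hilbert series); the surjectivity of $\mathbb{C}[x,y,z,w]\to R$ has to come from the module decomposition $R=A\oplus wA$, exactly as you set it up, with the Hilbert series then pinning down the kernel. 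Second, that decomposition $\phi_*\OOO_S=\OOO_Q\oplus\OOO_Q(-3)$ is the genuinely delicate step you correctly flag: over the vertex of the cone $\OOO_Q(-3)$ is not invertible, $\phi$ need not be flat there, and when $S$ is non-normal the trace-zero part is a priori only a rank-one torsion-free sheaf, so some argument (e.g.\ the dimension count $\dim R_m-\dim A_m=\dim A_{m-3}$ together with reflexivity/torsion-freeness, or passing to the normalization $T\simeq\mathbb{P}^2$) is still needed to identify it. A standard way to bypass the cover entirely --- and the one most in the spirit of the paper's reliance on Proposition \ref{claim-12} --- is the hyperplane-section principle: restrict to a general $E\in|-K_S|$, an irreducible arithmetic-genus-$1$ curve with a degree-$1$ polarisation (as identified in the proof of Proposition \ref{claim-12}), use $H^1(S,\OOO_S(-mK_S))=0$ to get surjectivity of $R_m\to H^0(E,-mK_S|_E)$, and quote the Weierstrass-type presentation $k[x,z,w]/(\mathrm{deg}\ 6)$, $\deg(x,z,w)=(1,2,3)$, of the section ring on such a curve; this treats the normal and non-normal cases uniformly. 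With either repair your argument is a complete proof.
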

\begin{proof}
Follows from the previous proposition in full analogy to the non-singular case, see \cite[8.3]{Dolgachev-2012}.
\end{proof}

\section{The relative projective space}
\label{section-6}

We put $$A_m=\sigma_* (\OOO_Y (- m K_Y ) ), \quad m\geq 0, $$ $$ A=\bigoplus_{m=0}^{\infty} 
A_m.$$ 

By Proposition \ref{claim-12} the sheaf $A_m$ on $C$ is a vector bundle of rank $m(m+1)/2+1$. 

\begin{remark}
\label{remark-15}
We can restrict the sheaf of algebras $A$ on a fiber of the morphism $\sigma$ and apply Corollary \ref{cor-13} to show that $A$ is generated by its components of degree~$\leq 3$.
\end{remark}

We will construct the relative weighted projective space $\mathbb{P}_C 
(1,1,2,3)$, that is a variety which is projective over $C$ and which has $\mathbb{P}(1,1,2,3)$ as a fiber over any point of $C$. We will also construct an embedding over $C$ 
\[
\xymatrix{
Y \ar[rr]^{i} \ar[dr]^{\sigma}  & &  \mathbb{P}_C (1,1,2,3) \ar[dl]
\\
& C &
} 
\]

We denote the cokernel of the natural inclusion $\alpha \colon S_2 A_1 \longrightarrow A_2$ by $G_2$. We get an exact sequence
\begin{equation}\label{equation-2}
0 \longrightarrow S^2 A_1 \longrightarrow A_2 \longrightarrow G_2 \longrightarrow 0.
\end{equation}

Consider the multiplication map $\mu \colon A_1 \otimes A_2 \longrightarrow A_3$. Let 
$V = \operatorname{Ker} \mu$, $G_3 = \operatorname{Coker} \mu$. It is easy to check fiberwise that  
$V$ и $G_3$ are vector bundles of rank $2$ and $1$ correspondingly. Hence we get an exact sequence
\begin{equation}\label{equation-3}
0 \longrightarrow (A_1\otimes A_2)/V \longrightarrow A_3 \longrightarrow G_3 \longrightarrow 0 
\end{equation}

Consider a commutative diagram of natural maps
\[
\xymatrix{
S^\bullet (A_1 \oplus A_2 \oplus A_3) \ar[r] & A 
\\
\ar[u] S^\bullet (A_1 \oplus A_2 \oplus ((A_1\otimes A_2)/V)) \ar[ur] & 
} 
\]

By Remark \ref{remark-11} we have $Y=\operatorname{Proj}_C A$. We define the relative projective spaces $$\mathbb{P}_C (1^2, 2^4, 3^7) = \mathrm{Proj}_C \ S^\bullet (A_1 \oplus A_2 \oplus A_3 )$$ $$ \mathbb{P}_C (1^2, 2^4, 3^6) = \mathrm{Proj}_C \ S^\bullet (A_1 \oplus A_2 \oplus ((A_1\otimes A_2)/V) ) ) $$

Thus we obtain a commutative diagram of varieties over $C$
\[
\xymatrix{
\mathbb{P}_C (1^2, 2^4, 3^7) \ar@{-->}[d]^{p} & \ar[l]_-{j} Y \ar@{-->}[dl]^{q}
\\
\mathbb{P}_C (1^2, 2^4, 3^6) &
} 
\]
where $q=p \circ j$. According to Remark \ref{remark-15} the map $j$ is a fiberwise embedding over~$C$. 

\begin{proposition}
\label{claim-16}
The image $ q (Y) $ is a quadratic cone $ \mathbb{P}_C (1,1,2)$ over $C$, that is a projective variety over $C$ such that a fiber over any point $x \in C$ is isomorphic to $ \mathbb{P} (1,1,2)$. Moreover, the following diagram is commutative
\[
\xymatrix{
\mathbb{P}_C (1^2, 2^4, 3^7) \ar@{-->}[d]^{p} & \ar[l]_-{j} \ar[d]_{v} Y \ar[dl]_{q}
\\
\mathbb{P}_C (1^2, 2^4, 3^6) & \ar[l]_-{u} \mathbb{P}_C (1,1,2)
} 
\]
where $u$ is a fiberwise embedding, $v$ is a fiberwise two-fold covering and $p$ is a projection from a point. In particular the map $q$ is a morphism.

\end{proposition}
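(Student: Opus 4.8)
The plan is to reduce the whole statement to a fibrewise analysis over the closed points of $C$ and then glue, relying on the fact that the auxiliary sheaves $V$, $G_2$, $G_3$ have already been shown to be locally free. Fix a closed point $x \in C$ and let $S = Y_x$ be the corresponding fibre, a Gorenstein del Pezzo surface of degree~$1$. By Corollary~\ref{cor-13} its anticanonical algebra is $\mathbb{C}[x,y,z,w]/(f)$ with $\deg(x,y,z,w)=(1,1,2,3)$ and $\deg f = 6$, so $S \hookrightarrow \mathbb{P}(1,1,2,3)$ is the degree-$6$ hypersurface, and the linear system $|-2K_S|$ realises the two-fold covering $\phi=\phi_{|-2K_S|}\colon S \to Q \cong \mathbb{P}(1,1,2)$ of the quadric cone. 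Under these identifications $A_1|_x$, $A_2|_x$, $A_3|_x$ are spanned by the monomials of degrees $1,2,3$, and the sequences \eqref{equation-2}, \eqref{equation-3} identify $G_2|_x = \langle z \rangle$ and $G_3|_x = \langle w \rangle$ as the new generators in degrees $2$ and $3$.

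I would first reinterpret the maps fibrewise. By Remark~\ref{remark-15} the map $j$ restricts on $S$ to the Veronese-type closed embedding of $S \hookrightarrow \mathbb{P}(1,1,2,3)$ into $\mathbb{P}(1^2,2^4,3^7)$ given by all anticanonical generators of degree $\le 3$. The inclusion of vector bundles $(A_1\otimes A_2)/V \hookrightarrow A_3$ with quotient $G_3$ exhibits $p$ as the linear projection whose centre is $\mathbb{P}_C(G_3) \subset \mathbb{P}_C(1^2,2^4,3^7)$, i.e. fibrewise the single point $P_w=[0:\dots:0:1]$ corresponding to $w$. The key step, and the main obstacle, is to prove that $j(Y)$ is disjoint from this centre, equivalently that $P_w \notin S$ on every fibre. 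This holds because the double-cover structure of $\phi$ forces $f$ to be quadratic in $w$ with a nonzero constant leading coefficient (otherwise $\phi$ would be birational), so $f(0,0,0,1) \ne 0$. The delicate point is uniformity over all fibres, including singular and non-normal ones; here one uses the explicit normal form of $f$ furnished by Corollary~\ref{cor-13} (via Reid and Abramovich--Fong in the non-normal case), which retains its $w^2$ term throughout. Since $j(Y) \cap \mathbb{P}_C(G_3)$ is closed and proper over $C$ and empty on each fibre, it is empty, so $q = p \circ j$ is a morphism.

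Next I would compute the image and produce the factorisation. After the coordinate $w$ is deleted, every remaining coordinate of $\mathbb{P}_C(1^2,2^4,3^6)$ is a monomial in $x,y,z$ only, so fibrewise $q$ depends just on $(x:y:z)\in \mathbb{P}(1,1,2)$ and equals $\phi$ followed by the monomial map. This yields the relative two-fold covering $v\colon Y \to \mathbb{P}_C(1,1,2)$, fibrewise equal to $\phi$, and the map $u\colon \mathbb{P}_C(1,1,2) \to \mathbb{P}_C(1^2,2^4,3^6)$ sending $(x:y:z)$ to the full collection of monomials of degrees $1,2,3$. Fibrewise $u$ is a closed embedding: the listed monomials generate $\mathbb{C}[x,y,z]$, so $u$ is induced by a surjection of graded rings and remains injective at the vertex because the degree-$2$ coordinate $z$ is retained. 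Hence $q(Y) = u(\mathbb{P}_C(1,1,2))$ is a relative quadric cone with fibre $\mathbb{P}(1,1,2)$ over each point of $C$, and $q = u\circ v$ by construction, giving the commutative diagram with $v$ a fibrewise two-fold covering and $u$ a fibrewise embedding.

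Finally I would globalise. Because $V$ and $G_3$ are locally free, the sequences \eqref{equation-2} and \eqref{equation-3} are compatible in families, so $\mathbb{P}_C(1,1,2)$, realised as $\operatorname{Proj}_C S^\bullet(A_1 \oplus G_2)$, is flat over $C$ with the stated fibres and the maps $u$, $v$ are defined over $C$. Combined with the fibrewise verifications above, this upgrades all the statements to genuine morphisms over $C$ and yields the asserted commutativity. The entire argument hinges on the single geometric input that the defining relation $f$ stays quadratic in $w$ uniformly in the family, which is why that is the step I expect to require the most care.
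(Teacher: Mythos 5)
Your argument is correct, and its overall architecture coincides with the paper's: both reduce to the fibres via the (non-canonical) splittings \eqref{equation-4}--\eqref{equation-6}, invoke Corollary \ref{cor-13} to identify $G_2$ and $G_3$ fibrewise with the new generators $z$ and $w$ of the anticanonical ring, and factor $q$ as $u\circ v$ with $v$ the bicanonical double cover and $u$ the monomial (Veronese-type) embedding of $\mathbb{P}(1,1,2)$. Where you genuinely diverge is in the justification of the statement that $q$ is a morphism. You identify the indeterminacy locus of $p$ as the section $\mathbb{P}_C(G_3)$ (fibrewise the point $[0:\dots:0:1]$) and prove directly that $j(Y)$ misses it, using the normal form of the sextic: the two-fold covering structure of $\phi_{|-2K_S|}$ forces the coefficient of $w^2$ in $f$ to be a nonzero constant, so $f(0,0,0,1)\neq 0$ on every fibre, and a closed subset of $j(Y)$ that is empty on each fibre is empty. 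The paper instead never examines the centre of projection: it builds a commutative diagram of surjections of graded algebras in which the composite $S^\bullet(A_1\oplus A_2\oplus((A_1\otimes A_2)/V))_x\to A_x$ factors through $S^\bullet(A_1\oplus G_2)_x\to A_x$, i.e.\ through the map $w$ inducing the base-point-free bicanonical covering of Corollary \ref{cor-13}\ref{corr-13a}; since that covering is everywhere defined and $u$ is a closed embedding induced by a ring surjection, $q$ is automatically a morphism. The two mechanisms are equivalent, but yours is more explicitly geometric (and correctly isolates the one nontrivial input, the persistence of the $w^2$-term across singular and non-normal fibres), while the paper's is more uniform and avoids any case analysis on the defining equation. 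Your closing remark on globalisation is also apt: like the paper, you use pointwise splittings, and the cleanest global formulation is to take $\mathbb{P}_C(1,1,2)=\operatorname{Proj}_C$ of the subalgebra of $A$ generated by $A_1$ and $A_2$, which makes $u$ and $v$ manifestly defined over $C$; neither write-up spells this out, and it would be worth a sentence in either.
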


\begin{proof}
On the fiber over any point $x\in C$ the sequences \eqref{equation-2} and \eqref{equation-3} split as the sequences of vector spaces. We have the following non-canonical isomorphisms \begin{eqnarray}
\label{equation-4}
(A_2)_x &\simeq& (S^2 A_1 \oplus G_2)_x 
\\
\label{equation-5}
(A_3)_x &\simeq& (S^3 A_1 \oplus (A_1 \otimes G_2) \oplus G_3)_x 
\\
\label{equation-6}
( (A_1 \otimes A_2) /V )_x &\simeq& (S^3 A_1 \oplus (A_1 \otimes G_2) )_x 
\end{eqnarray}

On the other hand, over a point $x\in C$ we have $$ \operatorname{Proj} S^\bullet (A_1 \oplus G_2)_x \simeq \mathbb{P} (1,1,2)$$

Using the isomorphisms \eqref{equation-4}--\eqref{equation-6} over a point $x\in C$ we construct a natural surjective map 
$$
\xymatrix@R=10pt{
s \colon S^\bullet 
(A_1 \oplus (S^2 A_1 \oplus G_2) \oplus (S^3 A_1 \oplus (A_1 \otimes G_2)))_x
\ar[r] \ar@{}[d]|*[@]{\simeq} & S^\bullet (A_1 \oplus G_2)_x, 
 \\
S^\bullet (A_1 \oplus A_2 \oplus ((A_1 \otimes A_2) /V)))_x &
}
$$
where $s$ identically maps $$s ( ( S^k A_1 )_x ) = (S^k A_1)_x,\ k\geq 1, \ \   s ( (G_2)_x ) = (G_2)_x$$ and $s$ extends to the corresponding tensor powers in the obvious way. Thus $s$ induces an embedding $u \colon \mathbb{P} (1,1,2) \longrightarrow \mathbb{P} (1^2, 2^4, 3^6)$.

The map $s$ can be extended to a commutative diagram 
\[
\xymatrix{
S^\bullet (A_1 \oplus (S^2 A_1 \oplus G_2) \oplus (S^3 A_1 \oplus (A_1 \otimes G_2) \oplus G_3))_x \ar[r]^-{s'} & A_x 
\\
S^\bullet (A_1 \oplus (S^2 A_1 \oplus G_2)_x \oplus (S^3 A_1 \oplus (A_1 \otimes G_2)))_x \ar[r]^-{s} \ar[u]^{z} \ar[ur] & S^\bullet (A_1 \oplus G_2)_x \ar[u]^{w}
} 
\]
Here the map $z$ is induced by the natural inclusion of vector spaces, and the map $s'$ is constructed in full analogy with the map $s$:
$$ s'(S^k A_1)_x = ( S^k A_1 )_x, \ k\geq1,$$
$$s'(G_2)_x = ( G_2 )_x \subset ( S^2 A_1 \oplus G_2 )_x \simeq (A_2)_x  \subset A_x$$
$$s'(G_3)_x = ( G_3 )_x \subset ( S^3 A_1 \oplus(A_1 \otimes G_2)) \oplus G_3 )_x \simeq ( A_3 )_x \subset A_x.$$

It is easy to check that $w$ is a fiberwise two-fold covering of the quadratic cone as in Corollary \ref{cor-13} and that the diagonal map in the diagram induces the map $q$. This proves the claim.\end{proof}

In the notation as above consider a variety $$Z = \overline{p^{-1} (u (\mathbb{P}_C (1,1,2)))} \subset \mathbb{P}_C (1^2, 2^4, 3^7).$$ Clearly it is projective over $C$.

\begin{proposition}
\label{claim-17}
Each fiber of $Z$ over $C$ is isomorphic to $\mathbb{P} (1,1,2,3)$. Moreover, there exists an embedding $i \colon Y \longrightarrow Z$ over $C$. 
\end{proposition}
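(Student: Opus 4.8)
The plan is to build the fiberwise weighted projective space $\mathbb{P}(1,1,2,3)$ as the total space of a cone construction over the quadratic cone $\mathbb{P}_C(1,1,2)$ already produced in Proposition~\ref{claim-16}, and then to identify $Y$ with a sub-variety inside it. Recall that we have the projection $p \colon \mathbb{P}_C(1^2,2^4,3^7) \dashrightarrow \mathbb{P}_C(1^2,2^4,3^6)$ from a point (concretely, from the $\operatorname{Proj}$ of the sub-symmetric-algebra where the degree-$3$ summand $G_3$ of rank $1$ has been dropped, as in \eqref{equation-3}). The variety $Z=\overline{p^{-1}(u(\mathbb{P}_C(1,1,2)))}$ is therefore the preimage of the embedded quadratic cone, compactified to include the center of projection, so fiberwise it is precisely the cone over $\mathbb{P}(1,1,2)$ with one extra coordinate of weight $3$ added.

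First I would verify the fiber statement pointwise. Over a closed point $x\in C$, the map $u$ of Proposition~\ref{claim-16} realizes $\mathbb{P}(1,1,2)=\operatorname{Proj} S^\bullet(A_1\oplus G_2)_x$ inside $\mathbb{P}(1^2,2^4,3^6)=\operatorname{Proj} S^\bullet(A_1\oplus A_2\oplus((A_1\otimes A_2)/V))_x$. The projection $p$ adds back the rank-$1$ bundle $G_3$ in degree $3$, so $p^{-1}$ of a subvariety is the join of that subvariety with the single point $[G_3]$. Taking $p^{-1}$ of $u(\mathbb{P}(1,1,2))$ and closing up gives the projective cone over $\mathbb{P}(1,1,2)$ with vertex the weight-$3$ point; since adjoining a coordinate of weight $3$ to the graded coordinate ring of $\mathbb{P}(1,1,2)$ yields exactly the weighted homogeneous coordinate ring of $\mathbb{P}(1,1,2,3)$, each fiber $Z_x\simeq \mathbb{P}(1,1,2,3)$. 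I would phrase this via the isomorphisms \eqref{equation-4}--\eqref{equation-6}: the graded ring of $Z_x$ is generated by $A_1$ in degree $1$, $G_2$ in degree $2$, and $G_3$ in degree $3$, with no relations among these generators, which is the polynomial ring $\mathbb{C}[x,y,z,w]$ with $\deg=(1,1,2,3)$.

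Next, for the embedding $i\colon Y\hookrightarrow Z$, I would use that $q=p\circ j$ factors through $v\colon Y\to\mathbb{P}_C(1,1,2)$ followed by $u$, as the commutative diagram in Proposition~\ref{claim-16} records. The original fiberwise-closed embedding $j\colon Y\hookrightarrow\mathbb{P}_C(1^2,2^4,3^7)$ lands, by construction, in the locus whose image under $p$ is $u(\mathbb{P}_C(1,1,2))$; concretely, the composite $v$ has image in the quadratic cone, so $j(Y)\subseteq p^{-1}(u(\mathbb{P}_C(1,1,2)))$, and taking closures gives $j(Y)\subseteq Z$. Defining $i$ to be $j$ with restricted target, it is an embedding since $j$ already is, and it is a morphism over $C$ because all maps in sight commute with the structure morphisms to $C$. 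This matches the fiberwise picture of Corollary~\ref{cor-13}: $Y_x$ is a degree-$6$ hypersurface in $\mathbb{P}(1,1,2,3)$, the double cover of the cone $\mathbb{P}(1,1,2)$ branched appropriately, which is exactly how $Z_x$ sits over its base cone.

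The main obstacle I anticipate is making the cone/join construction canonical in families rather than merely fiberwise, and controlling the closure operation defining $Z$: a priori the scheme-theoretic preimage $p^{-1}(u(\mathbb{P}_C(1,1,2)))$ need not be flat or have constant fiber dimension over $C$, and the rational map $p$ is undefined along the center of projection, so I must check that taking the closure $\overline{(\,\cdot\,)}$ does not introduce extra components or fibers of the wrong type over special points of $C$. The clean way around this is to describe $Z$ directly as a relative $\operatorname{Proj}$, namely $Z=\operatorname{Proj}_C S^\bullet(A_1\oplus G_2\oplus G_3)$ — so that its fibers are manifestly $\mathbb{P}(1,1,2,3)$ by the generator count above — and then to exhibit a surjection of graded $\mathscr{O}_C$-algebras that identifies this $\operatorname{Proj}$ with the closure inside $\mathbb{P}_C(1^2,2^4,3^7)$, thereby sidestepping the flatness worry since $A_1,G_2,G_3$ are vector bundles on $C$ of constant ranks $2,1,1$. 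Once $Z$ is presented as this relative $\operatorname{Proj}$, the surjection $S^\bullet(A_1\oplus G_2\oplus G_3)\twoheadrightarrow A$ dual to the generation statement of Remark~\ref{remark-15} yields the closed embedding $i\colon Y=\operatorname{Proj}_C A\hookrightarrow Z$ uniformly over $C$, completing the proof.
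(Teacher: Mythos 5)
Your argument is essentially the paper's own: fiberwise you identify $Z_x$ as the cone over $\mathbb{P}(1,1,2)$ with a weight-$3$ vertex, i.e.\ as $\operatorname{Proj}$ of the polynomial algebra on $(A_1\oplus G_2\oplus G_3)_x$ via the splittings \eqref{equation-4}--\eqref{equation-6}, and you obtain $i$ by corestricting $j$ using $q=u\circ v$, which is exactly how the paper's maps $r$ and $i$ arise from the surjection $S^\bullet(A_1\oplus A_2\oplus A_3)\longrightarrow S^\bullet(A_1\oplus G_2\oplus G_3)$ together with the inclusion $(p\circ r)(\mathbb{P}(1,1,2,3))\subset u(\mathbb{P}(1,1,2))$. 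The one caveat concerns your proposed global presentation $Z=\operatorname{Proj}_C S^\bullet(A_1\oplus G_2\oplus G_3)$ with a surjection onto $A$: that surjection requires global splittings of \eqref{equation-2} and \eqref{equation-3}, which need not exist and is precisely why the paper keeps the verification fiberwise; since this is an optional refinement rather than your core argument, it does not affect correctness.
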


\begin{proof}
It is obvious that $Z$ has dimension $3$ over $C$ and that $j(Y) \subset C$. For the fiber $Y_x$ over a point $x\in C$ we construct a commutative diagram
\[
\xymatrix{
\mathbb{P} (1^2, 2^4, 3^7) \ar@{-->}[d]^{p} & \ar[l]_-{r} \ar@{-->}[d]^{t} 
\mathbb{P}(1,1,2,3) \ar@{-->}[dl]^{q} & \ar[l]_-{i} \ar[dl]^-{v} Y_x
\\
\mathbb{P} (1^2, 2^4, 3^6) & \ar[l]^-{u} \mathbb{P} (1,1,2) &
} 
\]
where $u, v, p$ are as in Proposition \ref{claim-16}, $i$ is an embedding, $t$ is a projection from the point $(0:0:0:1)$, and $q=u \circ t$.

Using the isomorphisms \eqref{equation-4}--\eqref{equation-6} we construct a natural surjective map 
$$ 
S^\bullet (A_1 
\oplus A_2 \oplus A_3) \simeq S^\bullet (A_1 \oplus (S^2 A_1 \oplus G_2) \oplus 
(S^3 A_1 \oplus (A_1 \otimes G_2) \oplus G_3)) \longrightarrow S^\bullet (A_1 
\oplus G_2 \oplus G_3) 
$$

This map induces an embedding $r$. We must show that $r(\mathbb{P}(1,1,2,3))$ is contained in $Z_x$ and hence must coincide with it. The variety $Z$ was defined as the closure of the preimage of $u(\mathbb{P}_C (1,1,2))$ under the map $p$. Thus, it is enough to show that there is a fiberwise inclusion $$(p \circ r) (\mathbb{P} (1,1,2,3)) \subset u (\mathbb{P} (1,1,2)).$$ But it follows from the commutativity of the following diagram
\[
\xymatrix{
S^\bullet (A_1 \oplus (S^2 A_1 \oplus G_2) \oplus (S^3 A_1 \oplus (A_1 \otimes G_2) \oplus G_3))_x \ar[r] & S^\bullet (A_1 \oplus G_2 \oplus G_3)_x \ar[r] & A_x
\\
S^\bullet (A_1 \oplus (S^2 A_1 \oplus G_2) \oplus (S^3 A_1 \oplus (A_1 \otimes G_2)))_x \ar[r] \ar[u] \ar[ur] & S^\bullet (A_1 \oplus G_2)_x \ar[u] \ar[ur] &
} \qedhere
\]
\end{proof}

We denote the variety $Z$ by $\mathbb{P}_C (1,1,2,3)$ 

\begin{proof}[Proof of Theorem B]
Follows immediately from Proposition \ref{claim-17}.
\end{proof}


\begin{thebibliography}{AF2003}


\bibitem[AF03]{AF-2003}
M. Abe and M. Furushima
\newblock {\em On non-normal del Pezzo surfaces}.
\newblock Math. Nachr. 260, 3--13 (2003).

\bibitem[AW97]{AW-1997}
D. Abramovich and J. Wang.
\newblock {\em Equivariant resolution of singularities in characteristic 0.}
\newblock Math. Res. Lett., 4(2--3):427--433, 1997.

\bibitem[Al94]{Alexeev-1994ge}
V. Alexeev.
\newblock {\em General elephants of {${\bf Q}$}-{F}ano 3-folds.}
\newblock Compositio Math., 91(1):91--116, 1994.

\bibitem[Av14]{Avilov-conic-r}
A. Avilov
\newblock {\em Existence of standard models of conic fibrations over non-algebraically-closed fields.}
\newblock Mat. Sb., 205:12 (2014), 3--€"16.

\bibitem[Co95]{Corti-1995}
A. Corti.
\newblock {\em Factoring birational maps of threefolds after Sarkisov.}
\newblock J. Algebraic Geometry, 4 (1995) 223--254.

\bibitem[Co96]{Corti-1996}
A. Corti.
\newblock {\em Del Pezzo Surfaces over Dedekind Schemes.}
\newblock Annals of Mathematics, Second Series, Vol. 144, No. 3 (Nov., 1996), pp. 641--683.

\bibitem[Do12]{Dolgachev-2012}
I. V. Dolgachev.
\newblock {\em Classical Algebraic Geometry: A Modern View}.
\newblock Cambridge University Press, 2012.

\bibitem[EGA]{EGA-1967}
A. Grothendieck; J. Dieudonn\'e.
\newblock {\em \'El\'ements de g\'eom\'etrie alg\'ebrique: IV. \'Etude locale des sch\'emas et des morphismes de sch\'emas, Quatri\`eme partie.}
\newblock Publications Math\`ematiques de l'IH\'ES. 32: 5--361, 1967.

\bibitem[HW81]{HW-1981}
F. Hidaka and K. Watanabe.
\newblock {\em Normal Gorenstein Surfaces with Ample Anti-canonical Divisor}.
\newblock Tokyo J. of Math. Vol 04, no. 2 (1981), 319--330.

\bibitem[Is96]{Is-1996}
V. A. Iskovskikh.
\newblock {\em Factorization of birational maps of rational surfaces from the viewpoint of Mori theory.}
\newblock Russian Math. Surveys 51 (1996), no. 4, 585--652.

\bibitem[KMM87]{KMM-1987}
Y. Kawamata, K. Matsuda, K. Matsuki.
\newblock {\em Introduction to the minimal model problem}.
\newblock Algebraic Geometry Sendai 1985, Advanced Studies in Pure Math. 10 (1987), Kinokuniya and North-Holland, 283--360.

\bibitem[Ko97]{Ko-1997}
J\'a. Koll\'ar.
\newblock {\em Polynomials with integral coefficients, equivalent to a given polynomial.}
\newblock ERA Amer. Math. Soc. 03 (1997), pp. 17--27.

\bibitem[KM98]{KM-1998}
J\'a. Koll\'ar, Sh. Mori.
\newblock {\em Birational geometry of algebraic varieties.}
\newblock Cambridge tracts in mathematics, 1998.

\bibitem[Ma02]{Matsuki2002}
K. Matsuki.
\newblock {\em Introduction to the {M}ori program}.
\newblock Universitext. Springer-Verlag, New York, 2002.

\bibitem[Mo88]{Mori-1988}
Sh. Mori.
\newblock {\em Flip theorem and the existence of minimal models for $3$-folds}.
\newblock J. Amer. Math. Soc., 1988, 1, 117--253.

\bibitem[MP08]{MP-2008}
Sh. Mori, Yu. Prokhorov.
\newblock {\em Multiple fibers of del Pezzo fibrations}.
\newblock Proc. Steklov Inst. Math., 2009, 264, 131--145.

\bibitem[Sa82]{Sarkisov} 
V. G. Sarkisov	 
\newblock {\em On conic bundle structures }. 
\newblock Izv. Akad. Nauk SSSR Ser. Mat., 46:2 (1982), 371–408; Math. USSR-Izv., 20:2 (1983), 355--€"390 .


\bibitem[Pr08]{Prokhorov-2008}
Yu. Prokhorov.
\newblock {\em Gap conjecture for 3-dimensional canonical thresholds}.
\newblock J. Math. Sci. Univ. Tokyo, 15(4):449--459, 2008.

\bibitem[Pr15]{Prokhorov-2015}
Yu. Prokhorov.
\newblock {\em On G-Fano threefolds}.
\newblock Izv. Math., 79(4): 795--808, 2015.

\bibitem[Pr16]{Prokhorov-2016}
Yu. Prokhorov.
\newblock {\em Singular Fano threefolds of genus $12$}.
\newblock Sb. Math., 207:7 (2016), 983--€"1009.

\bibitem[Pr16a]{Prokhorov-2016-2}
Yu. Prokhorov.
\newblock {\em Q-Fano threefolds of index $7$}.
\newblock Proc. Steklov Inst. Math., 294 (2016), 139--153.

\bibitem[PrSh16]{Prokhorov-Shramov} 
Yu. Prokhorov,  C. Shramov
\newblock {\em Finite groups of birational selfmaps of threefolds }.
\newblock Math. Res. Lett, 25 (3), 2018, 957--972.

\bibitem[Re76]{Reid-1976}
M. Reid.
\newblock {\em Elliptic Gorenstein singularities of surfaces}.
\newblock University of Warwick preprint, 1976.

\bibitem[R94]{Reid-1994}
M. Reid.
\newblock {\em Nonnormal del Pezzo surfaces}.
\newblock Publ. Res. Inst. Math. Sci. 30, 695--727 (1994).

\bibitem[Re85]{Reid-1985}
M. Reid.
\newblock {\em Young person's guide to canonical singularities}.
\newblock Proceedings of symposia in pure mathematics, 1985. Vol. 46, 345--414.

\end{thebibliography}

\def\cprime{$'$} \def\mathbb#1{\mathbf#1}

\Addresses

\end{document}